\documentclass[11pt]{amsart}
\usepackage{microtype,fullpage,amsmath,amsthm}
\usepackage{graphicx,setspace,amscd,float,hyperref,enumitem, bbold}

\newtheorem{thm}{Theorem}[section]
\newtheorem{lem}[thm]{Lemma}
\newtheorem{qst}[thm]{Question}
\newtheorem{prop}[thm]{Proposition}

\theoremstyle{definition}
\newtheorem{df}[thm]{Definition}
\newtheorem{rk}[thm]{Remark}
\newtheorem{ex}[thm]{Example}
\newtheorem{nt}[thm]{Notation}
\newtheorem{con}[thm]{Convention}
\newtheorem*{mainthm}{Theorem}

\newcommand{\RR}{\mathbb R}
\newcommand{\NN}{\mathbb N}
\newcommand{\Gam}{\Gamma}

\newcommand{\ol}{\overline}
\newcommand{\mG}{\mathcal{G}}

\newcommand{\mC}{\mathcal{C}}
\newcommand{\mE}{\mathcal{E}}

\newcommand{\mD}{\mathcal{D}}
\newcommand{\mT}{\mathcal{T}}
\newcommand{\mP}{\mathcal{P}}

\newcommand{\mW}{\mathcal{W}}
\newcommand{\EG}{\mathcal{E}(\Gamma)}
\newcommand{\iwp}{\mathcal{IW}(\phi)}
\newcommand{\swg}{\mathcal{SW}(g)}
\newcommand{\lwg}{\mathcal{LW}(g)}
\newcommand{\mcg}{\mathcal{C}(G)}

\newcommand{\cwg}{\mathcal{CW}(g)}
\newcommand{\id}{\mathcal{ID}}
\newcommand{\iw}{\mathcal{IW}}
\newcommand{\idg}{\mathcal{ID}(\mathcal{G})}
\newcommand{\ar}{\mathcal{A}_r}
\newcommand{\phar}{\phi \in \mathcal{A}_r}

\newcommand{\outr}{Out(F_r)}
\newcommand{\autr}{Aut(F_r)}

\begin{document}

\title{Ideal Whitehead Graphs in $\outr$ III:\\ Achieved Graphs in Rank 3}
\author{Catherine Pfaff}
\date{}
\maketitle

\begin{abstract}
By proving precisely which singularity index lists arise from the pair of invariant foliations for a pseudo-Anosov surface homeomorphism, Masur and Smillie \cite{ms93} determined a Teichm\"{u}ller flow invariant stratification of the space of quadratic differentials. In this final paper of a three-paper series, we give a first step to an $Out(F_r)$ analog of the Masur-Smillie theorem. Since the ideal Whitehead graphs defined by Handel and Mosher \cite{hm11} give a strictly finer invariant in the analogous $Out(F_r)$ setting, we determine which of the twenty-one connected, simplicial, five-vertex graphs are ideal Whitehead graphs of fully irreducible outer automorphisms in $Out(F_3)$. 
\end{abstract}

\section{Introduction}

Let \emph{$F_r$} denote the free group of rank $r$ and $Out(F_r)$ its outer automorphism group.  In this paper we prove realization results for an invariant dependent only on the conjugacy class (within $Out(F_r)$) of the outer automorphism, namely the ``ideal Whitehead graph.'' 

\subsection{Main result}{\label{ss:mr}}

A ``fully irreducible'' (iwip) outer automorphism is the most commonly used analogue to a pseudo-Anosov mapping class and is generic. An element $\phi \in Out(F_r)$ is \emph{fully irreducible} if no positive power $\phi^k$ fixes the conjugacy class of a proper free factor of $F_r$. 

We give in Subsection \ref{ss:iwg} the exact $Out(F_r)$ definition of an ideal Whitehead graph. For now, to give context, we remark that, for a pseudo-Anosov surface homeomorphism, the component of an ideal Whitehead graph coming from a foliation singularity is a polygon with edges corresponding to the lamination leaf lifts bounding a principal region in the universal cover \cite{n86}.

Handel and Mosher define in \cite{hm11} a notion of an ideal Whitehead graph for a fully irreducible outer automorphism, a finite graph whose isomorphism type is an invariant of the conjugacy class of the outer automorphism. In this paper we investigate the extent to which the $Out(F_r)$ situation is more complicated by giving a partial answer to a question posed by Handel and Mosher in \cite{hm11}:

\begin{qst}{\label{Q:Q1}} For each $r \geq 2$, which isomorphism types of graphs occur as $\mathcal{IW}(\phi)$ for a fully irreducible $\phi \in Out(F_r)$? \end{qst}

\begin{mainthm} {\bf{A.}}
Exactly eighteen of the twenty-one connected, simplicial five-vertex graphs are the ideal Whitehead graph $\mathcal{IW}(\phi)$ for a fully irreducible outer automorphism $\phi \in Out(F_3)$.
\end{mainthm}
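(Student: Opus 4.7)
My plan is to split the theorem into two asymmetric halves: a positive realization half, constructing eighteen fully irreducibles with the prescribed ideal Whitehead graphs, and a negative obstruction half, ruling out the remaining three. Throughout, I would work with train track representatives $g\colon \Gamma \to \Gamma$ (in the sense of Bestvina--Handel), since for an ageometric fully irreducible $\phi$ with such a representative the ideal Whitehead graph $\mathcal{IW}(\phi)$ can be read off locally as a union of local Whitehead graphs at principal vertices, glued according to periodic direction orbits.

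For the realization direction, I would enumerate the eighteen candidate graphs and, for each, engineer an explicit train track map on a marked graph in rank $3$ (typically the rose $R_3$, occasionally a theta graph when the local structure at a single vertex cannot give the target). The construction proceeds by choosing a turn structure on $\Gamma$ compatible with the target graph $G$, writing down edge images so that exactly the pairs of germs corresponding to edges of $G$ are connected by some legal path in a principal iterate, and then verifying (i) that $g$ is an expanding irreducible train track map, (ii) that $g$ is rotationless with no periodic Nielsen paths, and (iii) that the resulting transition matrix is Perron--Frobenius and the module of characteristic polynomials forces full irreducibility. Properties (ii) and (iii) together with the stable lamination criteria then upgrade $g$ to a representative of a fully irreducible $\phi \in \mathcal{FI}_3$, and a direct local Whitehead graph computation confirms $\mathcal{IW}(\phi) \cong G$. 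The earlier papers in the series (\cite{p12a}, \cite{p12b}) presumably provide the packaging that turns such a list of data into a certificate, so I would invoke them rather than re-deriving the criteria.

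For the obstruction direction, the plan is to isolate structural features of the three exceptional graphs that are incompatible with being realized by any rotationless train track representative on any marked rank-$3$ graph. The natural invariants to exploit are: the valence list of $\mathcal{IW}(\phi)$ (constrained by the admissible partitions of the $2r = 6$ germs at principal vertices of $\Gamma$), the cut-vertex structure (since cut vertices in $\mathcal{IW}(\phi)$ correspond to identifications forced by periodic directions at a single vertex of $\Gamma$), and Whitehead-type connectivity properties forced by indivisibility of the attracting lamination. Armed with these, I would show for each of the three graphs that any train track representative producing it as a local Whitehead graph must either admit a Nielsen path, split as a free factor, or fail to be a homotopy equivalence, contradicting full irreducibility.

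The genuine obstacle is the obstruction half, not the constructions. Producing eighteen explicit train track maps is laborious but mechanical once the recipe from the prior papers is in hand; by contrast, certifying that a combinatorial graph simply cannot arise requires ruling out all marked graphs $\Gamma$ and all valence distributions simultaneously, and the arguments tend to be case-heavy and delicate since the ideal Whitehead graph only records the asymptotic information at the end of long iterations. I expect the cleanest route is to classify, up to admissible subdivision and fold equivalence, the possible local structures at principal vertices of $\Gamma$ that could contribute to a five-vertex connected ideal Whitehead graph in rank $3$, and then to check by inspection that none of the three excluded graphs appears in this finite list. Finally, I would assemble the two halves: the eighteen realizations plus the three exclusions exhaust $\mathcal{PI}_{(3;(-\sfrac{3}{2}))}$, yielding the count stated in Theorem \ref{T:MainTheorem}.
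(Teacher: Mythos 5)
Your positive half is essentially the paper's approach: for each of the eighteen graphs the paper writes down an explicit train track map on the rose $R_3$, produced as an ideal decomposition into proper full folds of roses via the ltt-structure, construction-path, and switch-sequence machinery of \cite{p12b} and \cite{p12c}, and certified by the Full Irreducibility Criterion together with \cite{p12c} Lemma 4.2 (pNp-freeness, a Perron--Frobenius transition matrix, and every edge of $\mathcal{G}$ realized as a turn of iterated edge images). Two corrections, though: \cite{p12b} Proposition 3.3 guarantees an ideally decomposed representative on the rose of a rotationless power, so theta graphs never enter; and no Perron--Frobenius or characteristic-polynomial condition by itself ``forces full irreducibility'' --- the criterion needs pNp-freeness and connectedness of the local Whitehead graph in addition to irreducibility of the transition matrix.

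The genuine gap is your obstruction half, which you identify as the main difficulty. Structurally, this paper does not prove it at all: Graphs II, V, and VII were shown unachievable in \cite{p12b}, and the new content here is precisely the eighteen constructions. More importantly, the invariants you propose (valence lists, cut-vertex structure, coarse connectivity of local data) cannot separate the three excluded graphs from the eighteen achieved ones: achieved graphs in the list, such as Graph I (the line), also have cut vertices and valence-one vertices, so any argument of that shape would prove too much. The actual obstruction uses the same machinery as the constructions: any fully irreducible with a connected five-vertex ideal Whitehead graph $\mathcal{G}$ yields a loop in the ideal decomposition diagram $\mathcal{ID}(\mathcal{G})$ whose ltt structures are birecurrent and whose triples are admissible switches and extensions, and for the three excluded graphs one shows (via the birecurrency/admissibility constraints, e.g.\ Lemma \ref{L:ValenceOneEdge}-type restrictions and the Irreducibility Potential Test) that no qualifying loop exists. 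Finally, your proposed contradiction ``must admit a Nielsen path\dots contradicting full irreducibility'' fails as stated: fully irreducibles may well have periodic Nielsen paths (geometric and parageometric ones do); the reduction to pNp-free representatives is legitimate only because an outer automorphism with this ideal Whitehead graph is ageometric --- its index $-\tfrac{3}{2}$ exceeds $1-r$ --- and that reduction has to be invoked explicitly rather than assumed.
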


\noindent The twenty-one connected, simplicial five-vertex graphs (\cite{cp84}) are:
~\\
\vspace{-6.5mm}
\begin{figure}[H]
\centering
\includegraphics[width=3.7in]{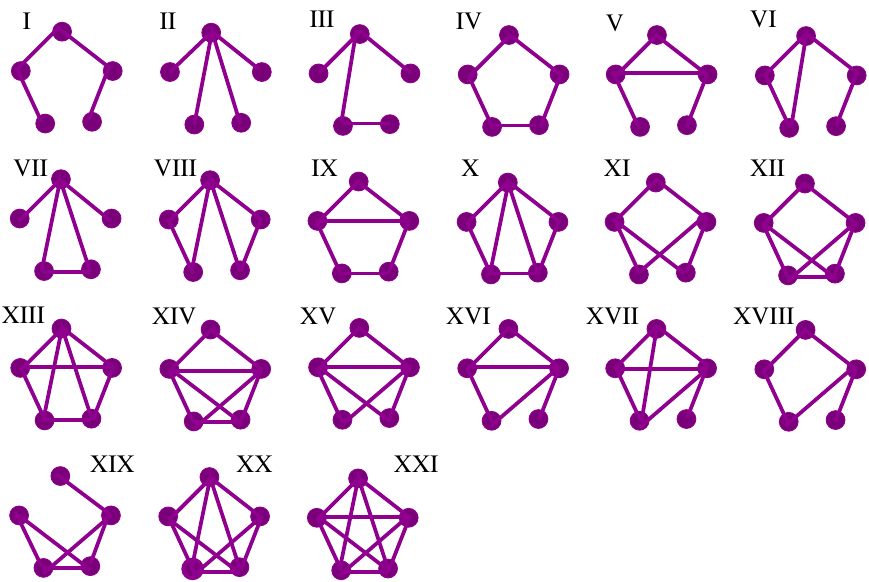}
\label{fig:21Graphs}
\end{figure}

\noindent Those that are not the ideal Whitehead graph for any fully irreducible $\phi\in Out(F_3)$ are:
\begin{figure}[H]
\centering
\includegraphics[width=2.3in]{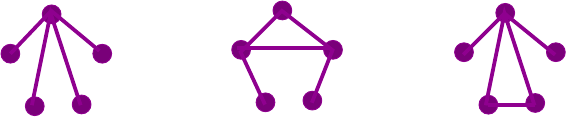}
\label{fig:UnachievableGraphs} 
\end{figure}

We focus on the situation of graphs with $2r-1$ edges because this restricts our attention to fully irreducibles as close to those coming from surface homeomorphisms (\emph{geometrics}) as possible without actually coming from surface homeomorphisms (or being geometric-like \emph{parageometrics}).

\subsection{An ideal Whitehead graph definition}{\label{ss:iwg}}

Fully irreducible outer automorphisms have ``train track representatives'' \cite{bh92}: Let $R_r$ denote the $r$-petaled rose (graph with one vertex and $r$ edges) together with an identification $\pi(R_r) \cong F_r$. A finite connected $1$-dimensional CW-complex $\Gamma$ such that each vertex has valence greater than two, together with a homotopy equivalence (\emph{marking}) $R_r \to \Gamma$, is called a \emph{marked graph}. A \emph{train track (tt) representative} of an outer automorphism $\phi \in Out(F_r)$ is a homotopy equivalence $g \colon \Gamma \to \Gamma$ of a marked graph $\Gamma$, where 

\begin{itemize}
\item $\phi=g_*$,
\item $g$ sends vertices to vertices, and
\item $g^k$ is locally injective on edge interiors for each $k>0$.
\end{itemize}

The ideal Whitehead graph was defined by Handel and Mosher in \cite{hm11} using the expanding lamination for a fully irreducible outer automorphism. We give an equivalent (see \cite{pt}) combinatorial description here, via the local Whitehead graphs and local stable Whitehead graphs of \cite{hm11} (interesting on their own right, as described in \ref{ss:context}):

Given a marked graph $\Gamma$ and point $x \in \Gamma$, a \emph{direction} at $x$ is a germ of edge-segments emanating from $x$. To avoid over-burdening the reader with notation, for an edge $e$, we additionally let $e$ denote the germ of initial segments of $e$ and let $\bar{e}$ denote the germ of terminal segments of $e$. (For edges, $\bar{e}$ denotes the edge $e$ with the reverse orientation.) A map $g \colon \Gamma \to \Gamma$ induces a map of directions, which we denote by $Dg$. A direction $d$ is called \emph{periodic} for $g$ if $Dg^k(d)=d$ for some $k>0$.

Let $\Gamma$ be a marked graph, $v \in \Gamma$ a vertex with at least three periodic directions, and $g\colon \Gamma \to \Gamma$ a train track representative of $\phi \in Out(F_r)$. The \emph{local Whitehead graph} $\mathcal{LW}(g; v)$ for $g$ at $v$ has:

(1) a vertex for each direction $d$ at $v$ and

(2) an edge connecting the vertex for $e_i$ with the vertex for $e_j$ when there exists an edge $E$ of $\Gamma$ and $k > 0$ such that $g^k(E)$ contains either $\bar{e_i}{e_j}$ or $\bar{e_j}{e_i}$ ($g^k$ \emph{takes the turn} $\{e_i,e_j\}$).

\noindent The \emph{local stable Whitehead graph} $\mathcal{SW}(g; v)$ is the subgraph of $\mathcal{LW}(g; v)$ obtained by restricting precisely to vertices labeled by periodic directions and the edges connecting them. Given an appropriate choice of train track representative (see Section \ref{Ch:PrelimDfns} for a detailed explanation), the \emph{ideal Whitehead graph $\mathcal{IW}(\phi)$ of $\phi$} will precisely be a disjoint union of the local stable Whitehead graphs at the periodic vertices. (In a nonideal situation, some gluing of pairs of vertices in distinct local stable Whitehead graphs will need to occur to obtain the ideal Whitehead graph.)

\subsection{Context}{\label{ss:context}}

Free groups and their outer automorphisms are studied via a variety of Whitehead graphs, linking purely algebraic, dynamical, and topological properties.  

Whitehead first introduced Whitehead graphs \cite{w36} in the context of solving the automorphic equivalence problem for free groups, i.e the question as to whether two elements $u,v\in F_r$ satisfy that $\phi(u)=v$ for some $\phi\in Aut(F_r)$. This led to the Whitehead algorithm of \cite{w36}, used to determine whether or not a word in the free group, written in a given basis, represents a primitive element of the free group. There is prolific work surrounding the Whitehead algorithm. Whitehead graphs also play a key role in the proof by Culler and Vogtmann \cite{cv86} of the contractibility of Culler-Vogtmann outer space.

The study of local Whitehead graphs, local stable Whitehead graphs, and ideal Whitehead graphs is much newer and carries more topological and dynamical information about outer automorphisms.

The local Whitehead graph is used, for example, in the Full Irreducibility Criterion of \cite{pI}. And in fact, it has been known for many years that full irreducibility requires that no representative has a local Whitehead graph with separating vertices.

Ideal Whitehead graphs and stable Whitehead graphs also tell us about fold lines in Culler-Vogtmann outer space and train track maps. Fold lines are of importance in understanding the geometry of outer space, as (apart from geodesics formed by shrinking the edge of a graph) geodesics of outer space are fold lines. (An explanation of how fold lines are geodesics can be found in \cite{fm11} or \cite{b12}.) Handel and Mosher proved in \cite{hm11} that, for a fully irreducible $\phi$, the ability to fold from a point in outer space (marked graph with lengths on edges) carrying a train track representative of a $\phi^k$ to a distinct such point, depends on a decomposition of the ideal Whitehead graph for $\phi$ into local stable Whitehead graphs. The decomposition of an ideal Whitehead graph into local stable Whitehead graphs further gives important information about the nature of the train track representatives of a fully irreducible. The possible decompositions of the ideal Whitehead graph are exactly determined by the separating vertices in the ideal Whitehead graph.

It is notable that, while local Whitehead graphs and stable Whitehead graphs record finer information about train track representatives, the ideal Whitehead graph is the only one of these representative invariants to in fact only rely on the expanding lamination of a fully irreducible (and hence to actually be an outer automorphism invariant).

\subsubsection{Ideal Whitehead graphs and laminations}{\label{sss:laminations}}

In \cite{bfh97}, Bestvina, Feighn, and Handel define the expanding lamination for a fully irreducible outer automorphism. This lamination is an analog of the expanding lamination for a pseudo-Anosov. However, as explained in \cite{chl08I} and \cite{abhs06}, the lamination can also be realized as the fixed point of a substitution system in the context of substitution systems and symbolic dynamics. In all of these contexts, the lamination is not only a set of lines invariant under the action of a fully irreducible, but also satisfies that all other lines are attracted to it under the action. We do not give or use a formal description of the expanding lamination in this paper, but one way to quickly understand the intrinsic significance of the object is to view it as the set of infinite words (or pairs of distinct points in the boundary of the free group) obtained by applying the automorphism repeatedly to a free group generator and then taking the closure in the space of lines. One can find an explanation in \cite{hm11} or \cite{pt} of how the edges of the ideal Whitehead graph come from leaves of the expanding lamination. 

\subsubsection{Relationships to mapping class groups and index theory}{\label{sss:index}}

Index theory and the index list realization work of Masur and Smillie in \cite{ms93} provide one large source of motivation for exploration of ideal Whitehead graph realization in the $Out(F_r)$ context. Index theory exists both in the mapping class group (pseudo-Anosov) setting and in the $Out(F_r)$ setting. We start by explaining the mapping class group context for those more familiar with surface theory.

Recall that the \emph{mapping class group} of a compact surface $S$ is the group of isotopy classes of orientation-preserving self-homeomorphisms of $S$. Pseudo-Anosovs are the most common mapping class group elements (see for example \cite{m11}) and are characterized by having a representative leaving invariant a pair of transverse measured singular minimal foliations. The index list can identically be ascertained from the invariant foliation of the pseudo-Anosov or from its dual $\RR$-tree. In fact, the singularities of the invariant foliation, lifted to the universal cover, are in one-to-one correspondence with the branchpoints of the dual $\RR$-tree. In the respective settings, the index list has an entry of $1-\frac{k}{2}$ obtained by counting the number $k$ of prongs at the singularity or the valence of the branch point. For a pseudo-Anosov, the component of the ideal Whitehead graph coming from a foliation singularity is a polygon with edges corresponding to the lamination leaf lifts bounding a principal region in the universal cover \cite{n86}. In fact, since the number of vertices of each polygonal ideal Whitehead graph component is determined by the number of prongs of the singularity, the index list and the ideal Whitehead graph record the same data in this setting. (For each component of the ideal Whitehead graph with $k$ vertices, the index list has an entry $1-\frac{k}{2}$.) Note that, alternatively, one can ascertain the index list (thus ideal Whitehead graph) from singularities of the expanding invariant lamination (obtained as the limit of any simple closed curve under repeated application of the pseudo-Anosov) or from the invariant train track. 

A key point about index lists in the pseudo-Anosov setting is that they determine a stratification (invariant under the Teichm\"{u}ller flow) of the cotangent bundle of a Teichm\"{u}ller space. The flow is proved ergodic and mixing by Masur \cite{m82} and Veech \cite{v82}. The stratification itself has even been extensively studied, for example in the work of \cite{emr12}, \cite{kz03}, \cite{l04}, \cite{l05}, and \cite{z10}. Our work particularly relates to the work on determining connected components of a strata, as we show that a given index list naturally divides into many components. However, the theorem of Masur and Smillie \cite{ms93}, we directly strive to emulate in this paper, lists precisely which singularity index lists permitted by the Poincar\'e-Hopf index formula are realized by pseudo-Anosovs. The author hopes this work is a step toward having the machinery to emulate, in the $Out(F_r)$ setting, theorems about the dynamics of Teichm\"{u}ller space with the Teichm\"{u}ller metric.

As with a pseudo-Anosov acting on Teichm\"{u}ller space (or a hyperbolic isometry acting on hyperbolic space), a fully irreducible acts with north-south dynamics \cite{ll03} on the natural compactification of Culler-Vogtmann outer space \cite{cv86}. Both the attracting and repelling points for the action are $\RR$-trees, denoted respectively $T^+_{\phi}$ and $T^-_{\phi}$. The repelling tree is an extension, to fully irreducibles not induced by pseudo-Anosovs, of the dual tree to the invariant foliation for a pseudo-Anosov. As with a pseudo-Anosov, the index list for a fully irreducible, as defined in \cite{hm11}, has an entry of $1-\frac{k}{2}$ obtained by counting the valence $k$ of the branch point. The index list can again also be computed from the expanding lamination of \cite{bfh97}. The key observation here (explored in this paper) is the fact that the singular lamination leaves need not simply connect adjacent periodic points in the boundary of $T^+_{\phi}$, as would make the situation analogous to that in the mapping class group setting. In other words, the components of the ideal Whitehead graph need not just be polygons. It is also important to note that, while the indices for a pseudo-Anosov on a surface sum to the Euler characteristic of the surface, the index sum for a fully irreducible in $Out(F_r)$ is not a function of $r$, but is uniformly bounded by a function of $r$, as proved in \cite{gjll}.

\subsubsection{Applications of ideal Whitehead graphs}{\label{sss:apps}}

In \cite{hm11} Mosher and Handel defined the axis bundle for a fully irreducible and use ideal Whitehead graphs to prove that the axis bundle for each nongeometric fully irreducible outer automorphism is proper homotopy equivalent to a line. The axis bundle is an analog to the axis for a hyperbolic isometry of hyperbolic space or the Teichm\"{u}ller axis for a pseudo Anosov. It has further significance in that, if $\phi$ and $\psi$ are two fully irreducible outer automorphisms, then $\mathcal{A}_{\phi}$ and $\mathcal{A}_{\psi}$ differ by the action of an $Out(F_r)$ on its Culler-Vogtmann outer space if and only if there exist powers $k,l>0$ such that $\phi^k$ and $\psi^l$ are conjugate in $Out(F_r)$. 

More recently Mosher and Pfaff \cite{mp13} proved a necessary and sufficient ideal Whitehead graph condition for an axis bundle to be a unique, single axis. The essence of this fact is that the axis bundle for a fully irreducible $\phi$ is the closure of the set of points in outer space (marked graphs with lengths on edges) on which there exists an affine train track representative for some power $\phi^k$ of $\phi$. And, as mentioned above, the decomposition of an ideal Whitehead graph into local stable Whitehead graphs gives information about the nature of the train track representatives of a fully irreducible, as well as about when two points in outer space are connected by a fold line.

Finally, the index list for a nongeometric fully irreducible can be computed from the number of vertices in the components of the ideal Whitehead graph, exactly as in the pseudo-Anosov case.

\subsection{Related work}{\label{ss:rw}}

While exploration into understanding ideal Whitehead graphs is still a young endeavour, the index theory for free group outer automorphisms has in some directions already been extensively developed. In fact, there are three types of $Out(F_r)$ index invariants in the literature, those of \cite{gl95}, \cite{gjll}, and \cite{ch10}. The index of $\phi$, as defined and studied in \cite{gjll}, is equal to the geometric index of $T^+_{\phi}$, as established by Gaboriau-Levitt \cite{gl95} for more general $\RR$-trees. \cite{ch12} provides a relationship between the index of \cite{ch10} and the geometric index, as well as uses the index to relate different properties of the attracting and repelling tree for a fully irreducible. There are also even index realization results of several different natures. For example, \cite{jl09} gives examples of automorphisms with the maximal number of fixed points on $\partial F_r$, as dictated by an inequality in \cite{gjll}. And \cite{p13} gives a version of the Masur-Smillie theorem for fully irreducibles in $Out(F_3)$, focusing on a related inequality in \cite{gjll}. 

In \cite{pI} we proved that certain types of graphs are never ideal Whitehead graphs. We proved in \cite{pII} that in each rank $r$, the complete ($2r-1$)-vertex graph is the ideal Whitehead graph for some fully irreducible $\phi \in Out(F_r)$. 

\subsection*{Acknowledgements}

\indent The author expresses gratitude to Lee Mosher for incredible patience and generosity, invaluable discussions, and answers to her endless questions; Mark Feighn for numerous meetings and question answers, as well as recommendations for uses of her methods; Michael Handel for meeting with her and recommendations; Yael Algom-Kfir and Mladen Bestvina for all they patiently taught her; Arnaud Hilion and Ilya Kapovich for their assistance and advice; and Martin Lustig for his continued interest in her work. She also extends her gratitude to Bard College at Simon's Rock and the CRM for their hospitality.

\section{Preliminary definitions and notation}{\label{Ch:PrelimDfns}}

For each graph $\mathcal{G}$ of Theorem \ref{T:MainTheorem}, we construct a train track representative (in the sense of \cite{bh92}) of a fully irreducible $\phi \in Out(F_r)$ with ideal Whitehead graph $\mathcal{IW}(\phi) \cong \mathcal{G}$. Subsection \ref{ss:tt} is thus devoted to establishing the definitions and notation we use regarding train track maps. The existence of ``ideally decomposed'' representatives was established in \cite{pII}, see Subsection \ref{ss:id}, allowing us to restrict attention to maps on roses. In \cite{pII}, we introduced ltt structures (described in Subsection \ref{ss:ltt}), graphs extending a potential ideal Whitehead graph to include the edges of the rose, as well as an edge including the nonfixed direction of the train track representative. An ``admissible fold'' (in practice an inverse of a fold) yields a move between ltt structures, either an ``extension'' or ``switch,'' as described in Subsection \ref{ss:se}. For a potential ideal Whitehead graph $\mathcal{G}$, an ``$\mathcal{ID}$ diagram'' is defined in \cite{pI} whose vertices are ltt structures, whose directed edges are admissible moves, and who contain loops for train track representatives of fully irreducible $\phi \in Out(F_r)$ with $\mathcal{IW}(\phi) \cong \mathcal{G}$. We describe $\id$ diagrams in Subsection \ref{ss:idd}. In Section \ref{s:Procedure} we describe methods for composing folds (within an $\mathcal{ID}$ diagram) to obtain an irreducible train track map whose ideal Whitehead graph will be the desired graph $\mathcal{G}$.

Given a rank $r \geq 2$, we let $\mathcal{FI}_r$ denote the set of all fully irreducible $\phi \in Out(F_r)$.

\subsection{Graph maps and train track maps}{\label{ss:tt}}
We first recall from the introduction the definition of a ``train track representative'' for a $\phi \in Out(F_r)$ (as defined in \cite{bh92}). 

\begin{df}[Train track (tt) representatives] 
Let $R_r$ denote the $r$-petaled rose (graph with one vertex and $r$ edges) together with an identification $\pi(R_r) \cong F_r$. A connected $1$-dimensional CW-complex $\Gamma$ such that each vertex has valence greater than two, together with a homotopy equivalence (\emph{marking}) $R_r \to \Gamma$, is called a \emph{marked graph}. A \emph{train track (tt) representative} of $\phi$ is a homotopy equivalence $g \colon \Gamma \to \Gamma$ of a marked graph $\Gamma$, where $\phi=g_*$, where $g$ sends vertices to vertices, and where $g^k$ is locally injective on edge interiors for each $k>0$.
\end{df}

We need a more general notion of a ``graph map'' and ``train track map''. Every map we deal with in this paper is on the rose. Thus, we assume $\Gamma$ is a rose (wedge of circles), with vertex $v$, and give all definitions only in this restrictive setting. 

\begin{df}[Graph maps and train track (tt) maps] 
We call a continuous map $g \colon \Gamma \to \Gamma$ a \emph{graph map} if it takes $v$ to $v$ and is locally injective on edge interiors. Here we assume a graph map is also a homotopy equivalence. A graph map $g$ is a \emph{train track (tt) map} if additionally each $g^k$, with $k > 1$, is locally injective on edge interiors. 
\end{df}

In general we use definitions of \cite{bh92} and \cite{bfh00} for discussing train track maps. We remind the reader here of additional definitions and notation given in \cite{pI}.

\begin{con}[Edge sets] 
We denote by $\mE^+(\Gamma):= \{E_1, \dots, E_{n}\}$ the edge-set of $\Gamma$ with a prescribed orientation and set
$\mE(\Gamma):=\{E_1, \overline{E_1}, \dots, E_n, \ol{E_n} \}= \{e_1, e_2, \dots, e_{2n-1}, e_{2n} \}$, where $\ol{E_i}$ is $E_i$ oppositely oriented, $e_{2i}=E_i$ and $e_{2i+1}=\ol{E_i}$. If an indexing of $\mE^+(\Gamma)$ (thus of $\EG$) is prescribed, we call $\Gamma$ \emph{edge-indexed}.
\end{con} 

\begin{df}[Edge paths] 
Depending on the context, an \emph{edge-path} $e_1 \cdots e_n$ of \emph{length} $n$ will either mean a continuous map $[0,n] \to \Gam$ that, for each $1 \leq i \leq n$, maps $(i-1,i)$ homeomorphically to $int(e_i)$, or the sequence of edges $e_1, \dots, e_n$.
\end{df}

\begin{df}[Expanding irreducible tt maps] 
The tt map $g \colon \Gamma \to \Gamma$ is \emph{irreducible} if $g$ leaves invariant no proper subgraph with a noncontractible component. $g$ is \emph{expanding} if for each $E \in \mE^+(\Gamma)$, $length(f^n(E)) \to \infty$ as $n \to \infty$.
\end{df}

\begin{df}[Directions and turns] 
$\mD(\Gam)$ denotes the set of \emph{directions} at $v$, i.e. germs of edge segments emanating from $v$. As in the introduction, for each $e \in \mathcal{E}(\Gamma)$, we also let $e$ denote the initial direction of $e$. We let $Dg$ denote the direction map induced by $g$, i.e. $Dg(e)=e_1$ where $g(e)=e_1 \dots e_k$ for some $e_1, \dots, e_k \in \EG$. We call $d \in \mD(\Gamma)$ \emph{periodic} if $Dg^k(d)=d$ for some $k>0$ and \emph{fixed} if $k=1$.

By a \emph{turn} at $v$, we mean an unordered pair $\{d_i, d_j\}$ with $d_i, d_j \in \mD(\Gam)$. Then $g$ induces a turn map defined by $Dg(\{e_i, e_j\}) = \{Dg(e_i), Dg(e_j)\}$. For an edge-path $\gamma=e_1 \cdots e_k$ in $\Gamma$, we say $\gamma$ \emph{contains} or \emph{takes} $\{\overline{e_i}, e_{i+1} \}$ for each $1 \leq i < k$. We define
$$\mathcal{T}(g) := \bigcup_{\text{edges } \in \EG}\{\text{turns taken by } g(e)\}.$$
A turn $\{d_i, d_j\}$ is \emph{illegal} for $g$ if $Dg^p(d_i)=Dg^p(d_j)$ for some $p > 1$ and is considered \emph{legal} otherwise.
\end{df}

Given $\Phi \in Aut(F_r)$, defined for $X_1, \dots, X_r \in X$ by $\Phi(X_i)=x_{i,1} \cdots x_{i,n_i}$ for some $x_{i,k} \in X^{\pm 1}$, we define the graph map $g_{\Phi}$ corresponding to $\Phi$. Let $\Gamma = R_r$ with the identity marking. Then, for each $E_i \in \mE^+(\Gam)$, we have $g_{\Phi}(E_i)=e_{i,1} \cdots e_{i,n_i}$ with the $e_{i,k} \in \mE(\Gam)$ such that, under the identification of $F(X_1, \dots, X_r)$ with $\pi_1(R_r, v)$, we have that each $E_i$ and $e_{i,k}$ corresponds to $X_i$ and $x_{i,k}$.

By the correspondence of a $\Phi \in Aut(F_r)$ with a graph map $g_{\Phi}$, we can translate all of the language of directions, turns, etc, to free group automorphisms.

\subsection{Periodic Nielsen paths}{\label{ss:pnp}}

\begin{df}[Nielsen paths] Let $g \colon \Gamma \to \Gamma$ be an expanding irreducible train track map. Bestvina and Handel \cite{bh92} define a nontrivial tight path $\rho$ in $\Gamma$ to be a \emph{periodic Nielsen path (pNP)} if, for some power $R \geq 1$, we have $g^R(\rho) \cong \rho$ rel endpoints (and just a \emph{Nielsen path (NP)} if $R=1$). 

A NP $\rho$ is called \emph{indivisible} (hence is an ``iNP'') if it cannot be written as $\rho = \gamma_1\gamma_2$, where $\gamma_1$ and $\gamma_2$ are themselves NP's.
\end{df}

For a fully irreducible $\phi \in \mathcal{FI}_r$, \cite{bh92} gives an algorithm for finding an expanding irreducible tt representative (\emph{stable} representative) with the minimal number of iNP's.

\begin{df}[Rotationless]  An expanding irreducible tt map is called \emph{rotationless} if each periodic direction is fixed and each pNP is of period one. By \cite{fh11} Proposition 3.24, one can define a $\phi \in \mathcal{FI}_r$ to be \emph{rotationless} if one (hence all) of its tt representatives is rotationless.
\end{df}

Gaboriau, J\"aeger, Levitt, and Lustig introduce in \cite{gjll} the ``ageometric'' subclass of nongeometric fully irreducibles, defined there in terms of the index sum. We give here an equivalent definition in terms of periodic Nielsen paths.

\begin{df}[Ageometrics] $\phi \in \mathcal{FI}_r$ is called \emph{ageometric} if it has a tt representative with no pNP's (each stable representative of each rotationless power has no pNP's).
\end{df}

Since ageometrics are our main focus, we set notation and let $\ar$ denote the subset of $\mathcal{FI}_r$ consisting of the ageometric elements of $\mathcal{FI}_r$.

\subsection{Whitehead graphs}{\label{ss:iwg}}

Ideal Whitehead graphs were introduced by Handel and Mosher in \cite{hm11}. They are proved to be an outer automorphism invariant in \cite{pt}, where the equivalence of several ideal Whitehead graph definitions is also established. We give here an explanation in terms of a pNP-free tt representative $g \colon \Gamma \to \Gamma$ of a $\phi \in \mathcal{A}_r$, where $\Gamma$ is a rose with vertex $v$.

\begin{df}[Local Whitehead graphs $\mathcal{LW}$]
The \emph{local Whitehead graph} $\mathcal{LW}(g)$ for $g$ at $v$ has:
\begin{enumerate}
\item a vertex for each direction $d \in \mathcal{D}(\Gam)$ and
\item edges connecting vertices for $d_1, d_2 \in \mathcal{D}(\Gam)$ when $\{d_1, d_2\} \in \mT(g^p)$ for some power $p > 1$.
\end{enumerate}
\end{df}

\begin{df}[Ideal Whitehead graphs $\mathcal{IW}$]
The \emph{local stable Whitehead graph} $\swg$ is the subgraph of $\lwg$ obtained by restricting precisely to vertices with periodic direction labels and the edges of $\lwg$ connecting them. In this circumstance of a pNP-free representative on the rose, the \emph{ideal Whitehead graph $\iwp$ of $\phi$} is precisely $\swg$.
\end{df}

We need one more notion of a Whitehead graph. 
\begin{df}[Limited Whitehead graphs $\mW_L$]
Let $g \colon \Gam \to \Gam$ be a graph map on the rose. The \emph{limited Whitehead graph} ($\mW_L(g)$) for $g$ has:
\begin{enumerate}
\item a vertex for each direction $d \in \mD(\Gam)$ and
\item edges connecting vertices for $d_1, d_2 \in \mD(\Gam)$ when $\{d_1, d_2\} \in \mT(g)$.
\end{enumerate}
\end{df}

Notice that, if $g$ is a tt map, so that $\mathcal{LW}(g)$ is defined, $\mathcal{W}_L(g)$ is a subgraph of $\mathcal{LW}(g)$ and, in fact, $\mathcal{LW}(g) = \mathcal{W}_L(g^p)$ for some sufficiently high power $g^p$ of $g$.

\subsection{Ideal decompositions}{\label{ss:id}}

Let $r\ge 2$ and let $X$ be a free basis for $F_r$. By a \emph{standard Nielsen generator}, we mean an automorphism $\Phi \in \autr$ such that there exist $x,y\in X^{\pm 1}$ with $\Phi(x)=yx$ and $\Phi(z)=z$ for each $z\in X^{\pm 1}$ with $z\ne x^{\pm 1}$. We specify such $\Phi$ using notation $\Phi=[x\mapsto yx]$.

In \cite{pII} it is proved that, if the ideal Whitehead graph of $\phi \in \mathcal{A}_r$ is a connected $(2r-1)$-vertex graph, then there exists a rotationless power $\phi^R$ with a pNP-free tt representative on the rose that is a composition of graph maps corresponding to standard Nielsen generators:

\begin{prop}[{\cite[Proposition~3.3]{pI}}]{\label{P:ID}}
Let $\phi \in \mathcal{A}_r$ be such that $\mathcal{IW}(\phi)$ is a connected $(2r-1)$-vertex graph.
Then there exists a rotationless power $\psi=\phi^R$ and pNP-free tt representative $g$ of $\psi$ on the rose decomposing into the graph maps 
\begin{equation}{\label{e:id}}
\Gam=\Gamma_0 \xrightarrow{g_1} \Gamma_1 \xrightarrow{g_2} \cdots \xrightarrow{g_{n-1}} \Gamma_{n-1} \xrightarrow{g_n} \Gamma_n=\Gam
\end{equation}
\noindent where
{\begin{description}
\item [(I)] The index set $\{1, \dots, n \}$ is viewed as the set $\mathbf {Z}$/$n \mathbf {Z}$ with its natural cyclic ordering.
\newline
\item [(II)] Let $\Gamma_0$ be the base rose $R_r$ with the edges identified with the fixed generators of $F_r$. Each
$\Gamma_s$ is a rose with a marking $m_s \colon \Gamma_0 \to \Gamma_s$ so that, if we denote $m_s(e_t) = e_{s,t}$, for some $i_s$, $j_s$ with $e_{s,i_s} \neq (e_{s,j_s})^{\pm 1}$, we have 
\begin{equation}
g_s(e_{s-1,t}):=
      \begin{cases}
        e_{s,i_s} e_{s,j_s} \text{ for } t=j_s \\
        e_{s,t} \text{ for } e_t \neq e_{j_s}^{\pm 1}
      \end{cases}
\end{equation}
\item [(III)] For each $e_i \in \mathcal{E}(\Gamma)$ such that $i \neq j_n$, $Dg(e_i) = e_i$
\item [(IV)] $m_n$ is the identity map and $e_{n,t} = e_t$ for each $1 \leq t \leq 2r$.
\end{description}}
\end{prop}

As in \cite{pII}, we call tt maps satisfying (I)-(IV) of the proposition \emph{ideally decomposable (i.d.)} and call (\ref{e:id}) an \emph{ideal decomposition (i.d.)}.

\begin{nt}
We let $u \colon \{1, \dots, n \} \to \{1, \dots, 2r \}$ and $a \colon \{1, \dots, n \} \to \{1, \dots, 2r \}$ be defined by $u(s) = j_s$ and $a(s) = i_s$. Notice that the direction $e_{s, j_s}$ is missing from the image of $Dg_s$. We thus call it the \textbf{u}nachieved direction for $g_s$ (or in $\Gamma_s$) and denote it by $d^u_s$, i.e. $d^u_s=e_{s,u(s)}$. Since the direction $e_{s, i_s}$ is the image of two directions under $Dg_s$, we call it the twice-\textbf{a}chieved direction for $g_s$ (or in $\Gamma_s$) and denote it by $d^a_s$, i.e. $d^a_s=e_{s,a(s)}$.

We use the notation $f_k:= g_k \circ \cdots \circ g_1 \circ g_n \circ \cdots \circ g_{k+1}\colon \Gamma_k \to \Gamma_k$ and
\[g_{k,i}:=
\begin{cases}
    g_k \circ \cdots \circ g_i\colon \Gamma_{i-1} \to \Gamma_k \; \text{if} \; k>i\\
    g_k \circ \cdots \circ g_1 \circ g_n \circ \cdots \circ g_i \; \text{if} \; k<i\\
     \text{the identity } id \; \text{if} \; k=i-1. \\[-2mm]
\end{cases}
\]
\end{nt}

\begin{rk}{\label{R:Cyclic}}
It is proved in \cite{pI} that if $\Gamma = \Gamma_0 \xrightarrow{g_1} \Gamma_1 \xrightarrow{g_2} \cdots \xrightarrow{g_{n-1}}\Gamma_{n-1} \xrightarrow{g_n} \Gamma_n = \Gamma$ is an ideal decomposition of $g$, then $\Gamma_k \xrightarrow{g_{k+1}} \Gamma_{k+1} \xrightarrow{g_{k+2}} \cdots \xrightarrow{g_{k-1}} \Gamma_{k-1} \xrightarrow{g_k} \Gamma_k$ is an ideal decomposition of $f_k$ (and $f_k$ is in fact also a pNP-free tt representative of the same $\psi=\phi^R$ as $g$).
\end{rk}

\subsection{Lamination train track (ltt) structures}{\label{ss:ltt}}

Instead of just viewing the $g_i$ in an ideal decomposition as graph maps, we want to be able to ``blow up'' the vertices of the graphs $\Gamma_{i-1}$, $\Gamma_i$ to give them extra structure and then have $g_i$ induce a move between these structures. The structures are the ``ltt structures'' of \cite{pI} defined as follows (the moves are the extensions and switches of Subsection \ref{ss:se}):

\begin{df}[Lamination train track (ltt) structures $G(g)$]{\label{d:ltt}}

Let $g \colon \Gamma \to \Gamma$ be a pNP-free tt map on the rose (with vertex $v$). The \emph{colored local Whitehead graph} for $g$ at $v$, denoted $\cwg$, is the graph $\mathcal{LW}(g)$ with the subgraph $\mathcal{SW}(g)$ colored purple and $\mathcal{LW}(g)- \mathcal{SW}(g)$ colored red (nonperiodic direction vertices are red). Let $N(v)$ be a contractible open neighborhood of $v$ and $\Gamma_N=\Gamma-N(v)$. For each $E_i \in \mathcal{E}^+(\Gam)$, we add vertices labeled by the directions $E_i$ and $\overline{E_i}$ at the corresponding boundary points of the partial edge $E_i-(N(v) \cap E_i)$. The \emph{lamination train track (ltt) structure $G(g)$} for $g$ is formed from $\Gamma_N \bigsqcup \mathcal{CW}(g)$ by identifying each vertex $d_i$ in $\Gamma_N$ with the vertex $d_i$ in $\mathcal{CW}(g)$. Vertices for nonperiodic directions are red, edges of $\Gamma_N$ are black, and all periodic vertices are purple.

By the \emph{smooth structure} on $G(g)$ we mean the partition of the edges at each vertex into two sets: $\mathcal{E}_b$ (the black edges of $G(g)$) and $\mathcal{E}_c$ (the colored edges of $G(g)$). We call any path in $G(g)$ alternating between colored and black edges \emph{smooth}.
\end{df}

\begin{ex} For $g = g_{\Phi}$, where
$$\Phi =
\begin{cases}
a \mapsto abacbaba\bar{c}abacbaba \\
b \mapsto ba\bar{c} \\
c \mapsto c\bar{a}\bar{b}\bar{a}\bar{b}\bar{a}\bar{b}\bar{c}\bar{a}\bar{b}\bar{a}c
\end{cases}.
$$
the ltt structure looks like:
\begin{figure}[H]
\centering
\noindent \includegraphics[width=1.1in]{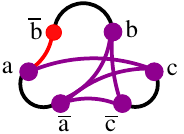}
\label{fig:lttExample} \\[-3mm]
\end{figure}
\end{ex}

In light of Remark \ref{R:Cyclic}, for an ideal decomposition $\Gamma = \Gamma_0 \xrightarrow{g_1} \Gamma_1 \xrightarrow{g_2} \cdots \xrightarrow{g_{n-1}}\Gamma_{n-1} \xrightarrow{g_n} \Gamma_n = \Gamma$, one in fact has an associated cyclic sequence of ltt structures $G_1=G(f_1), \dots, G_n=G(f_n)=G(g)$. For this reason we sometimes write an ideally decomposed map as $(g_1, \dots, g_n; G_1, \dots, G_n)$. 

\begin{rk}{\label{r:UniqueRed}}
Notice that the red vertex of each $G_i$ is labeled by $d^u_i$, which is the unique nonperiodic direction. Also, since $g_i$ (viewed as an automorphism) replaces each copy of $x_{u(i)}$ with $x_{a(i)}x_{u(i)}$, $G_i$ has a unique red edge and it connects $d^u_i$ to $\ol{d^a_i}$ (this argument is formulated more completely in \cite{pI}).
\end{rk}

The next subsection (Subsection \ref{ss:se}) is dedicated to describing the only two possible categories of moves relating the structures $G_i$ and $G_{i+1}$ in an ideal decomposition.

\subsection{Switches and extensions}{\label{ss:se}}

We need an abstract notion of an ltt structure so that we can take a potential ideal Whitehead graph, extend it to an abstract ltt structure, and see if this could be the ltt structure for a representative, as in Proposition \ref{P:ID}. In fact, we will want an entire sequence of ltt structures (and moves between them) to have an ideal decomposition.

Since we frequently deal with graphs whose vertices are labeled by the directions at the vertex of a rose (sometimes abstractly), we first establish notation for discussing such graphs and their vertex-labeling sets. We call a $2r$-element set of the form $\{X_1, \overline{X_1}, \dots, X_r, \overline{X_r} \}$, with elements paired into \emph{edge pairs} $\{X_i, \overline{X_i}\}$, a \emph{rank}-$r$ \emph{edge pair labeling set} (we write $\overline{\overline{X_i}}=X_i$). We call a graph with vertices labeled by an edge pair labeling set a \emph{pair-labeled} graph, and an \emph{indexed pair-labeled} graph if an indexing is prescribed.

\begin{df}{\label{d:abstractltt}}[(Abstract) lamination train track (ltt) structures]
A \emph{lamination train track (ltt) structure} is a colored pair-labeled graph $G$ (black edges are included, but not considered colored) satisfying
\noindent {\begin{description}
\item [I] Each vertex has valence at least $2$.
\item [II] Each edge has $2$ distinct vertices.
\item [III] Vertices are either purple or red.
\item [IV] Edges are of $3$ types:
{\begin{description}
\item[Black Edges] $G$ has a single black edge connecting each pair of (edge-pair)-labeled vertices. There are no other black edges. Thus, each vertex is contained in a unique black edge.
\item[Red Edges] A colored edge is red if and only if at least one of its endpoint vertices is red.
\item[Purple Edges] A colored edge is purple if and only if both endpoint vertices are purple.
\end{description}}
\item [V] Two distinct colored edges never connect the same pair of vertices.
\end{description}}
It is proved in \cite{pI} that, in our situation, $G(g)$ has a unique red vertex and red edge (see Remark \ref{r:UniqueRed}). Hence, we also require:
\noindent {\begin{description}
\item [VI] $G$ has precisely $2r-1$ purple vertices, a unique red vertex, and a unique red edge.
\end{description}}
\end{df}

We consider ltt structures \emph{equivalent} that differ by an ornamentation-preserving graph isomorphism (homeomorphism taking vertices to vertices and edges to edges and preserving colors and labels).

We say that an ltt structure is \emph{based at} a rose $\Gam$ if its vertex labelling set is $\mD(\Gam)$.

\begin{nt}{\label{n:ltt}}
We let $[x,y]$ denote the edge connecting the vertex pair labeled by $\{x,y\}$. To be consistent with the situation of Definition \ref{d:ltt}, we call the unique red vertex $d^u$. Also, we call the unique red edge $[d^u, \ol{d^a}]$. In particular, $\ol{d^a}$ labels the purple vertex of the red edge.

Suppose one has a sequence of ltt structures $G_k, \dots, G_n$ such that, for each $k \leq i \leq n$, we have that $G_i$ is based at the rose $\Gam_i$. Then the notation will carry indices (for example, $d^u_i$ will denote the red vertex in $G_i$). Additionally, $e_{u(i)}$ denotes the edge in the $\Gam_i$ whose initial direction is $d^u_i$ and $e_{a(i)}$ denotes the edge in $\Gam_i$ whose terminal direction is $\ol{d^a_i}$.

Since we are looking for an ltt structure $G$ which could belong to a representative with some potential ideal Whitehead graph $\mG$ as its ideal Whitehead graph (in which case $\mG$ would be the purple subgraph of $G$), we denote the purple subgraph by \emph{$\mP(G)$} and, if $\mG \cong \mP(G)$, say $G$ is an \emph{ltt structure for $\mG$}. For the same reason, in light of the \cite{pI} Birecurrency Condition (of which we remind the reader in Proposition \ref{p:BC} below), we call an ltt structure \emph{admissible} which is \emph{birecurrent}, i.e has a locally smoothly embedded line traversing each edge infinitely many times as $\RR \to \infty$ and as $\RR \to -\infty$. $\mcg$ will denote the colored subgraph of $G$.

Given a graph map $g_k \colon \Gam_{k-1} \to \Gam_k$ as in Proposition \ref{P:ID}(II) and ltt structures $G_i$ based at $\Gam_i$, for $i = k-1, k$, the induced map $D^Cg_k \colon \mC(G_{k-1}) \to \mC(G_k)$ (when it exists) is defined by sending each vertex $d$ in $G_{k-1}$ to the vertex $Dg_k(d)$ in $G_k$ and each edge $[d_1, d_2]$ in $\mC(G_{k-1}$ to $[Dg_k(d_1), Dg_k(d_2)]$.
\end{nt}

\begin{df}[Generating triples]
By a \emph{generating triple} $(g_k; G_{k-1}, G_k)$ for $\mG$ we mean an ordered set of three objects, where 
\begin{description}
\item [gtI] $g_k \colon \Gam_{k-1} \to \Gam_k$ is a graph map as in Proposition \ref{P:ID}(II).
\item [gtII] For $i = k-1, k$, we have that $G_i$ is an ltt structure for $\mG$ based at $\Gam_i$ and in fact:
{\begin{itemize}
\item the red vertex of $G_k$ is labelled by $e_{u(k)}$, i.e. $d^u_k = e_{u(k)}$, and
\item the red edge of $G_k$ is $[e_{u(k)}, \ol{e_{a(k)}}]$, i.e. $d^a_k = e_{a(k)}$.
\end{itemize}}
\item [gtIII] The induced map $D^Cg_k \colon \mC(G_{k-1}) \to \mC(G_k)$ exists and restricts to a graph isomorphism from $\mP(G_{k-1})$ to $\mP(G_k)$.
\end{description}
\end{df}

The triple is \emph{admissible} if
\begin{enumerate}
\item each $G_i$ is admissible and
\item either $u(k-1) = u(k)$ or $u(k-1) = a(k)$.
\end{enumerate}

\begin{rk}
The inspiration for (2) is its necessity in an ideal decomposition for ensuring that the composition is a tt map with $2r-1$ periodic directions.
\end{rk}

\vskip5pt

Given an ltt structure $G_k$ for $\mG$ and a \emph{determining} purple edge $[d^a_k,d_{k,l}]$, there are potentially two admissible triples $(g_k; G_{k-1}, G_k)$ that could arise in an ideal decomposition. They correspond to the situation where $u(k-1) = u(k)$ (the ``extension'' situation) and the situation where $u(k-1) = a(k)$ (the ``switch'' situation).

\begin{df}[Extensions]
The \emph{extension} determined by $[d^a_k,d_{k,l}]$ is the generating triple $(g_k; G_{k-1}, G_k)$ for $\mG$ satisfying
\begin{description}
\item [extI] $u(k-1) = u(k)$, thus the restriction of $D^Cg_k$ to $\mP(G_{k-1})$ is an isomorphism sending the vertex $e_{k-1,i}$ to the vertex $e_{k,i}$ for each $i \neq u(k-1)$.
\item [extII] $e_{k-1, u(k)} = d^u_{k-1}$, i.e. is the red vertex of $G_{k-1}$.
\item [extIII] $\ol{d^a_{k-1}}=d_{k-1,l}$, i.e. the purple vertex of the red edge in $G_{k-1}$ is $d_{k-1,l}$.
\end{description}
\end{df} 

\begin{df}[Switches]
The \emph{switch} determined by $[d^a_k,d_{k,l}]$ is the generating triple $(g_k; G_{k-1}, G_k)$ for $\mG$ satisfying
\begin{description}
\item [swI] $u(k-1) = a(k)$, thus $D^Cg_k$ restricts to an isomorphism from $\mP(G_{k-1})$ to $\mP(G_{k})$ defined by
\[
\begin{cases}
    e_{k-1, u(k)} \mapsto e_{k, a(k)} = e_{k, u(k-1)}\\
    e_{k-1, s} \mapsto e_{k, s} \text{ for } s \neq u(k)\\
\end{cases}
\]
\item [swII] $e_{k-1, a(k)} = d^u_{k-1}$, i.e. is the red vertex of $G_{k-1}$.
\item [swIII] $\ol{d^a_{k-1}}=d_{k-1,l}$, i.e. the purple vertex of the red edge in $G_{k-1}$ is $d_{k-1,l}$.
\end{description}
\end{df} 

\begin{df}[Admissible compositions]
An \emph{admissible composition} $(g_{i-k}, \dots, g_i; G_{i-k-1}, \dots, G_i)$ for $\mG$ with $0 \leq k <i$ consists of
\begin{itemize}
\item a sequence of automorphisms $g_{i-k}, \dots, g_i$ such that $\Gamma_{i-k-1} \xrightarrow{g_{i-k}} \cdots  \xrightarrow{g_i} \Gamma_i$ satisfies Proposition \ref{P:ID} (I)-(III) and
\item a sequence of ltt structures $G_{i-k-1}, \dots, G_i$ for $\mG$ so that each $(g_j; G_{j-1}, G_j)$, with $i-k \leq j \leq i$ is either an admissible switch or an admissible extension.
\end{itemize}
When $k=1$, we call the composition an \emph{i.d. admissible triple}. We may also write that $(g_{i-k-1}, \dots, g_i; G_{i-k-1}, \dots, G_i)$ is an admissible composition if 
$$g_{i-k-1}=[e_{i-k-2,u(i-k-1)} \mapsto e_{i-k-1,a(i-k-1)}e_{i-k-1,u(i-k-1)}] \colon \Gam_{i-k-2} \to \Gam_{i-k-1}$$ 
is additionally as in Proposition \ref{P:ID}, $d^u_{i-k-1}=e_{i-k-1,u(i-k-1)}$, and $d^a_{i-k-1}=e_{i-k-1,a(i-k-1)}$.

Two admissible compositions $(g_{i-k}, \dots, g_i; G_{i-k-1}, \dots, G_i)$ and $(g'_{i-k}, \dots, g'_i; G'_{i-k-1}, \dots, G'_i)$ for the same $\mG$ are considered \emph{equivalent} if, for each $i-k-1 \leq j \leq i$, we have that $G_j$ is equivalent to $G_j'$ and that $g_j$ and $g_j'$ correspond to the same standard Nielsen generator.
\end{df}

\subsection{Ideal decomposition diagrams ($\mathcal{ID}(\mathcal{G})$)}{\label{ss:idd}}

Recall that a directed graph is strongly connected, if for each pair of vertices $v_1$, $v_2$ in the graph, the graph contains a directed path from $v_1$ to $v_2$. Notice that one can find the union of the maximal strongly connected components in a graph by taking the union of all of the directed loops in the graph. Since we want a diagram containing a loop for each ideally decomposed pNP-free representative with a given ideal Whitehead graph, this is precisely what we want. Such a diagram is defined in \cite{pI}:

\begin{df}[Ideal decomposition diagrams] Let $\mG$ be a connected $(2r-1)$-vertex graph. The \emph{ideal decomposition diagram} for $\mG$ (or $\idg$) is defined to be the disjoint union of the maximal strongly connected subgraphs of the directed graph where:
\begin{description}
\item[Nodes] The nodes are equivalence classes of admissible indexed ltt structures for $\mG$.
\item[Edges] For each equivalence class of an i.d. admissible triple $(g_i; G_{i-1}, G_i)$ for $\mG$, there is a directed edge $E(g_i; G_{i-1}, G_i)$ in $\idg$ from the node $[G_{i-1}]$ to the node $[G_i]$.
\end{description}
\end{df}

\cite{pt} gives a procedure for constructing $\id$ diagrams (there called ``$\mathcal{AM}$ diagrams'').

\section{Unachieved Graphs}{\label{S:Unachieved}}

The three connected, $(2r-1)$-vertex graphs that are not the ideal Whitehead graph for an outer automorphism in $\mathcal{A}_3$ are proved not to be achieved as such in \cite{pI}. We include here the tools used to prove they are not achieved, as they can be viewed as preliminary checks performed on a graph $\mG$ before applying the strategies described below to find a representative yielding $\mG$.

\begin{prop}[Birecurrency Condition]{\label{p:BC}}\cite{pI} 
Let $\phi \in \mathcal{A}_r$ be such that $\mathcal{IW}(\phi)$ is connected with $2r-1$ vertices. Then the ltt structure $G(g)$ for each pNP-free tt representative $g$ of each power $\phi^p$, with $p \geq 1$, is birecurrent. 
\end{prop}

Proposition \ref{p:BC} can in fact be used to prove that Graph II is unachieved, as it is shown in \cite{pI} that there is no admissible (birecurrent, in particular) ltt structure for Graph II. It also explains why the ltt structures giving the nodes of an $\id$ diagram must be birecurrent.

The following test is inspired by our need for our representatives to be irreducible:

\vskip5pt

\noindent \textbf{Irreducibility Potential Test \cite{pI}:} Check whether, in each connected component of $\mathcal{ID}(\mG)$, for each edge vertex pair $\{d_i, \overline{d_i}\}$, there is a node $N$ in the component such that either $d_i$ or $\overline{d_i}$ labels the red vertex in the structure $N$. If it holds for no component, then $\mG$ is unachieved.

\vskip5pt

It is shown in \cite{pI} that no component of the $\id$ diagram for Graph V or for Graph VII passes the test, which is how we show that neither is ``achieved.''

\section{Representative construction strategies}{\label{s:Procedure}}

\vskip10pt

\subsection{Underlying Strategy}{\label{S:UnderlyingStrategy}}

The following proposition (\cite{pI}, Proposition 8.3) allows us to search for representatives realized as loops in $\mathcal{ID}$ diagrams:

\begin{prop}[\cite{pI}, Proposition 8.3]{\label{P:ReferenceLoop}} Suppose $(g_1, \dots, g_n; G_0, G_1 \dots, G_{n-1}, G_n)$ is a pNP-free rotationless i.d. tt representative of a $\phi \in \mathcal{A}_r$ such that $\mathcal{IW}(\phi)=\mathcal{G}$ is a connected $(2r-1)$-vertex graph. Then $E(g_1; G_0, G_1) * \dots * E(g_n; G_{n-1}, G_n)$ exists in $\idg$ and forms an oriented loop. \end{prop}

Inspired by the Full Irreducibility Criterion of \cite{pII}, the following lemma (\cite{pII} Lemma 4.2) tells us what kind of loops to look for:

\begin{lem}[\cite{pII}, Lemma 4.2]{\label{l:RepresentativeLoops}}
Suppose $\mG$ is a connected $(2r-1)$-vertex graph, $g = g_n \circ \cdots \circ g_1$ is rotationless, and 
$$L(g_1, \dots, g_n; G_0, G_1 \dots, G_{n-1}, G_n)= E(g_1; G_{0}, G_1) * \dots * E(g_n; G_{n-1}, G_n)$$ 
is a loop in $\mathcal{ID}(\mG)$ satisfying each of the following:
\begin{description}
\item [A] Each purple edge of $G(g_{n,1})$ is labelled by a turn taken by some $g^p(e_j)$, where $p \geq 1$ and $e_j \in \mathcal{E}(\Gamma_0)$.
\item [B] For each $1 \leq i,j \leq r$, there exists some $p \geq 1$ such that $g^p(E_j)$ contains either $E_i$ or $\overline{E_i}$.
\item [C] $g$ has no periodic Nielsen paths. \\[-6mm]
\end{description}
\noindent Then $g$ is a tt representative of some $\phi \in \mathcal{A}_r$ such that $\iw(\phi)=\mG$.
\end{lem}

In light of Proposition \ref{P:ReferenceLoop}, given a potential ideal Whitehead graph $\mG$ (connected, with $2r-1$ vertices), our strategies involve finding a loop in $\idg$ for a rotationless pNP-free tt representative $g$ of $\phi \in \ar$ with $\iw(\phi) \cong \mG$. In light of Lemma \ref{l:RepresentativeLoops}, our main obstacles are ensuring that:
~\\
\vspace{-5mm}
\begin{description}
\item [st1] The loop is indeed a loop (returns to the initial ltt structure).
\item [st2] The entire graph $\mG$ is obtained (as in Lemma \ref{l:RepresentativeLoops}A).
\item [st3] $g$ is irreducible (as in Lemma \ref{l:RepresentativeLoops}B).
\item [st4] $g$ has no pNP's (as in Lemma \ref{l:RepresentativeLoops}C).
\item [st5] $g$ is rotationless.
\end{description}

\begin{rk}
It will also be important in both strategies presented below that the map obtained in fact lives in $\idg$. For this reason one must check the admissibility of the triples, in particular, the birecurrence of each of the ltt structures.
\end{rk}

\begin{rk}{\label{r:Checks}}
Lemma \ref{l:RepresentativeLoops} tells us how to address (st3). One can check for (st5) by composing generator direction maps (if $g = g_n \circ \cdots \circ g_1$, then $Dg = Dg_n \circ \cdots \circ Dg_1$). This obstacle is then simple to address by taking an adequate power to fix the periodic directions. One can check for the existence of pNP's (by hand, as in \cite{pII}, or using the computer package of Thierry Coulbois, for example). While this was not necessary for constructing any of the maps appearing in the proof of Theorem \ref{T:MainTheorem}, one could construct a ``Nielsen path prevention sequence'' (as in \cite{pII}) to ensure that there are no pNP's. 
\end{rk}

One of the trickiest obstacles to address is (st2). Hence, we devote Subsection \ref{S:TrackingProgress} precisely to addressing this obstacle.

\vskip10pt

\subsection{Tracking progress and construction compositions}{\label{S:TrackingProgress}} 

This subsection is devoted to explaining how we make sure an entire ideal Whitehead graph is actually achieved. An important lemma we include first is indirectly proved in \cite{pI}. We provide a direct proof here for completeness.

\begin{lem}{\label{L:LimitedWGs}}
Suppose $(g_m, \dots, g_n; G_m, \dots, G_n)$ is an admissible composition with the standard notation that $g_i =  [e_{i-1,u(i)} \to e_{i,a(i)} e_{i,u(i)}]$, for each $m \leq i \leq n$. Then 
\begin{description}
\item [A] for each $E_{m-1,t} \in \mE^+(\Gam_{m-1})$, we have that $g(E_{m-1,t})$ contains $E_{n,t}$,
\item [B] $g$ is a graph map,
\item [C] $$\mT(g_{n,m}) = \bigcup_{k = m}^{n}Dg_{n,k+1}(\{\ol{e_{k,a(k)}}, e_{k,u(k)}\}),$$
\item [D] and for each $m \leq s \leq n$, $$\mT(g_{n,m}) = Dg_{n,s+1}(\mT(g_{s,m})) \cup \mT(g_{n,s+1}).$$
\end{description}
\end{lem}

\begin{proof}
(D) follows directly from (C). We proceed by induction on $n-m$ to prove (A)-(C). For the base case suppose $n-m=0$, so that $g_{n,m}=g_m$. For $e_{m-1,i} \neq e_{m-1,u(m)}^{\pm 1}$, we have that $g_m(e_{m-1,i})=e_{m,i}$, which clearly has no back-tracking, contains $e_{m,i}$, and provides no taken turns. $g_m(e_{m-1,u(m)})=e_{m,a(m)}e_{m,u(m)}$, which also has no back-tracking (since each $g_m$ corresponds to a standard Nielsen generator), contains $e_{m,u(m)}$, and contains precisely the turn $\{\ol{e_{m,a(m)}}, e_{m,u(m)}\}$. Also, $g_m(\ol{e_{m-1,u(m)}})=\ol{e_{m,u(m)}}$ $\ol{e_{m,a(m)}}$, which again has no back-tracking (since it is the path $e_{m,a(m)}e_{m,u(m)}$ oppositely oriented), contains $\ol{e_{m,u(m)}}$, and contains precisely the same turn $\{\ol{e_{m,a(m)}}, e_{m,u(m)}\}$ as $g_m(e_{m-1,u(m)})$. 

Now suppose $n-m > 1$ and (A)-(C) hold for all $n \geq n' \geq m' \geq m$ with $n'-m' \leq n-m$. In particular, $g_{n-1,m}(E_{m-1,t})$ contains $E_{n-1,t}$ for each $E_{m-1,t} \in \mE^+(\Gam_{m-1})$, $g_{n-1,m}$ is a graph map, and 
$$\mT(g_{n-1,m}) = \bigcup_{k = m}^{n-1}Dg_{n-1,k+1}(\{\ol{e_{k,a(k)}}, e_{k,u(k)}\}).$$

For each $E_{m-1,t} \in \mE^+(\Gam_{m-1})$, there is an $N_j > 0$ and function $J \colon \{1, \dots, N_j\} \to \NN_{>0}$ so that $g_{n-1,m}(E_{m-1,j}) = e_{n-1,J(1)} \cdots e_{n-1,J(N_j)}$ for some $e_{n-1,J(t)} \in \mE(\Gam_{n-1})$. We first show that no $g_{n,m}(E_{m-1,j})$ has cancellation. Since $g_{n-1,m}(E_{m-1,j})$ has no cancellation and $g_n$ is also a graph map, it suffices to show that we never have $Dg_n(\ol{e_{n-1,J(i)}}) = Dg_n(e_{n-1,J(i+1)})$. In other words, it suffices to show that no turn in $\mT(g_{n-1,m})$ is the illegal turn $\{e_{n-1,u(n)}, e_{n-1,a(n)}\}$ for $g_n$. Now
$$\mT(g_{n-1,m}) = \{\ol{e_{n-1,a(n-1)}}, e_{n-1,u(n-1)}\} \cup Dg_{n-1}(\bigcup_{k = m}^{n-2}Dg_{n-2,k+1}(\{\ol{e_{k,a(k)}}, e_{k,u(k)}\})).$$
Since the drection $e_{n-1,u(n)}$ is not in the image of $Dg_n$, we are left to show that we cannot have
\begin{equation}{\label{e:EqualTurns}}
\{\ol{e_{n-1,a(n-1)}}, e_{n-1,u(n-1)}\} = \{e_{n-1,u(n)}, e_{n-1,a(n)}\}.
\end{equation}
Since $(g_n; G_{n-1}, G_n)$ is either an admissible switch or extension, either $u(n-1) = a(n)$ or $u(n-1) = u(n)$. We consider separately the switch ($u(n-1) = a(n)$) and extension ($u(n-1) = u(n)$) cases. 

If $u(n-1) = u(n)$, then (\ref{e:EqualTurns}) becomes 
\begin{equation}{\label{e:NewEqualTurns}}
\{\ol{e_{n-1,a(n-1)}}, e_{n-1,u(n)}\} = \{e_{n-1,a(n)}, e_{n-1,u(n)}\},
\end{equation}
\noindent so equality in (\ref{e:EqualTurns}) would imply $\ol{e_{n-1,a(n-1)}} = e_{n-1,a(n)}$. But then the ``determining'' edge for the extension would be $[e_{n,a(n)},e_{n,a(n)}]$, which cannot be a purple edge in an ltt structure.

If $u(n-1) = a(n)$, then (\ref{e:EqualTurns}) becomes $\{\ol{e_{n-1,a(n-1)}}, e_{n-1,a(n)}\} = \{e_{n-1,u(n)}, e_{n-1,a(n)}\}$, so equality would imply $\ol{e_{n-1,a(n-1)}} = e_{n-1,u(n)}$. But then the ``determining'' edge for the switch would be $[e_{n,a(n)},e_{n,u(n)}]$, which cannot be a purple edge in an ltt structure,as $e_{n,u(n)}$ is red. So it suffices to show that $\{e_{n-1,u(n)}, e_{n-1,a(n)}\}$ is not in the image of $Dg_{n-1}$. Since $g_{n-1}=[e_{n-2,u(n-1)} \mapsto e_{n-1,a(n-1)}e_{n-1,u(n-1)}]$, the image of $Dg_{n-1}$ is missing precisely the direction $e_{n-1,u(n-1)}$. Hence, $\{e_{n-1,u(n)}, e_{n-1,a(n)}\}$ is not in the image of the induced turn map and no $g_{n,m}(E_{m-1,j})$ has cancellation and $g_{n,m}$ is a graph map.

Now, for each $E_{m-1,j}$, we have 
$$g_{n,m}(E_{m-1,j})=g_n(e_{n-1,J(1)} \cdots e_{n-1,J(N_j)})=g_n(e_{n-1,J(1)}) \cdots g_n(e_{n-1,J(N_j)}).$$
By the inductive hypotheses, $E_{n-1,j}$ is contained in $g_{n-1,m}(E_{m-1,j})$. Thus, some $e_{n-1,J(i)}=E_{n-1,j}$. And $g_n(e_{n-1,J(i)})$ contains $E_{n,j}$ by the inductive hypotheses. So we are left to prove (C).

$\mT(g_{n,m})$ consists precisely of the turns taken by the $g_n(e_{n-1,J(i)})$ and the turns 
\begin{equation}{\label{e:TakenTurns}}
\{Dg_n(\ol{e_{n-1,J(i)}}), Dg_n(e_{n-1,J(i+1)})\}.
\end{equation}
The set of turns of the form of (\ref{e:TakenTurns}) is precisely $Dg_n(\mT(g_{n-1,m}))$.

Now notice that 
$$\bigcup_{k = m}^{n}Dg_{n,k+1}(\{\ol{e_{k,a(k)}}, e_{k,u(k)}\})=
\{\ol{e_{n,a(n)}}, e_{n,u(n)}\}\cup Dg_n(\mT(g_{n-1,m})).$$
So we are left to show that the set of turns taken by the $g_n(e_{n-1,J(i)})$ is precisely $\{\ol{e_{n,a(n)}}, e_{n,u(n)}\}$. This follows from the fact that $g_n=[e_{n-1,u(n)} \mapsto e_{n,a(n)}e_{n,u(n)}]$ and that each $g_{n-1,m}(E_{m-1,j})$ contains $E_{n-1,m}$.
\qedhere
\end{proof}

Before presenting the two classes of strategies, we show how to check Lemma \ref{l:RepresentativeLoops}A. In light of Lemma \ref{L:LimitedWGs}, one can check that the entire graph is built by taking images of the red edges created by $g_i$, as in the following example:

\vskip4pt

\begin{ex}{\label{E:GraphBuilding}} In light of Lemma \ref{L:LimitedWGs}, we show here an example of how to check that all of $\mathcal{G}$ is ``built'' (we iteratively take the image under each $Dg_k$ of the edges ``created'' thus far):
\begin{figure}[H]
\centering
\noindent \includegraphics[width=4.7in]{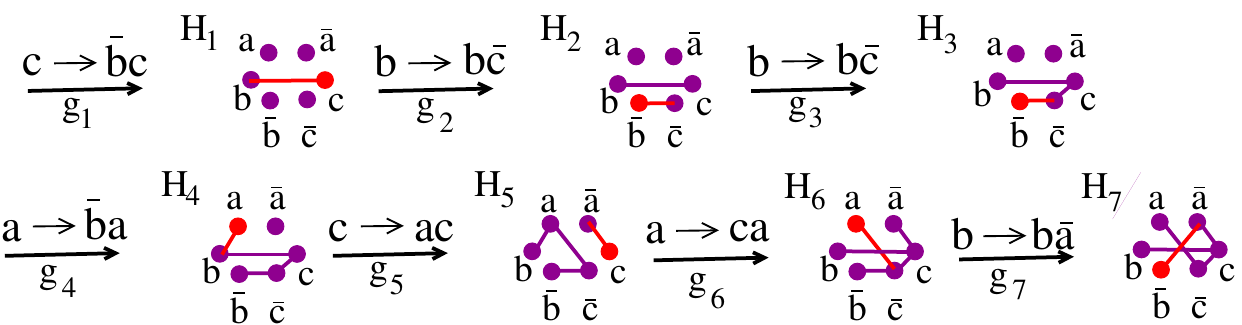}
\label{fig:TrackingProgress}
\end{figure}

We include subgraphs $H_i=\mW_L(g_{i,1})$ of the ltt structures $G_i$ to track how edges are ``built.'' $g_1=[c \mapsto \bar{b}c]$.  Thus, the red edge of $H_1$ is $[c,b]$, with red vertex $c$. Since $g_2=[b \mapsto b\bar{c}]$, the red edge in $H_2$ is $[\bar{b},\bar{c}]$, with red vertex $\bar{c}$. $H_2$ will also contain the image $[c,b]$ of the red edge $[c,b]$ under $Dg_2:\bar{b} \mapsto c$. Since $g_3=[b \mapsto b\bar{c}$], the red edge in $H_3$ will be $[\bar{b},\bar{c}]$, with red vertex $\bar{b}$. $H_3$ will also contain the image $[c,\bar{c}]$ of the red edge $[\bar{b},\bar{c}]$ and the image $[c,b]$ of the purple edge $[c,b]$ under $Dg_3:\bar{b} \mapsto c$. Since $g_4=[a \mapsto \bar{b}a]$, the red edge in $H_4$ will be $[a,b]$, with red vertex $a$. $H_4$ will also contain the image $[\bar{b},\bar{c}]$ of the red edge $[\bar{b},\bar{c}]$ and the images $[c,b]$ and $[c,\bar{c}]$ of the purple edges $[c,b]$ and $[c,\bar{c}]$ under $Dg_4:a \mapsto \bar{b}$. The remaining $H_i$ are constructed similarly.
\end{ex}

\subsubsection{Construction paths}{\label{sss:constructions}}

We present a key tool used to ensure that all edges of a potential ideal Whitehead graph appear.

\begin{df}
We call a composition of standard generators $\Phi = \Phi_n \circ \cdots \circ \Phi_1$, with $\Phi_i \colon x_i \to y_ix_i$ for each $i$, a \emph{construction automorphism} if $x_i = x_j$ for each $1 \leq i,j \leq n$. 
\end{df}

\begin{ex}{\label{E:ConstructionAutomorphism}}

Consider the graph maps induced by a construction automorphism $g = g_i \circ \cdots \circ g_{i-4}$ where $g_{i-4} =  [a \to a \bar c]$, $g_{i-3} =  [a \to ab]$, $g_{i-2} =  [a \to a \bar c]$, $g_{i-1} =  [a \to a \bar b$], and $g_i = [a \to a \bar c]$. The following depicts the limited Whitehead graphs $W_j = \mathcal{W}_L(g_j \circ \cdots \circ g_{i-4})$.
~\\
\vspace{-5mm}
\begin{figure}[H]
\centering
\noindent \includegraphics[width=5in]{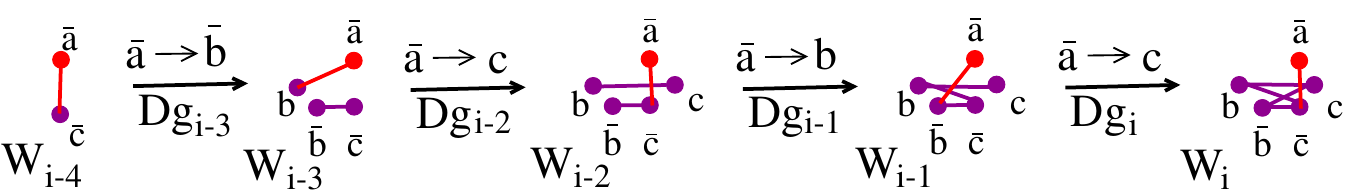} \\[-2mm]
\label{fig:ConstructingExample}
\end{figure}

It can be noted that the edges of $W_i$ are the colored edges of a smooth path (in the ltt structure sense) in the graph depicted below (obtained by adding the black edges $[b, \bar b]$ and $[c, \bar c]$). The edges are traversed in the reverse order of their ``addition.''
~\\
\vspace{-7mm}
\begin{figure}[H]
\centering
\noindent \includegraphics[width=1in]{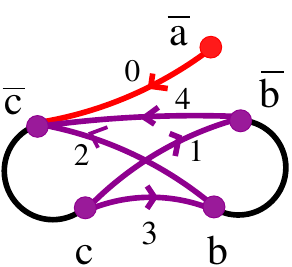} \\[-3.5mm]
\label{fig:ConstructionPath}
\end{figure}

\end{ex}

The ``construction'' of such a path by a construction automorphism is proved to always occur in \cite{pII} (see Lemma \ref{L:Building} below).

In building automorphisms yielding particular ideal Whitehead graphs, we reverse this construction procedure. We start with a smooth path (a ``construction path'') in a certain subgraph (the ``construction subgraph'') of a potential ltt structure and then use the path to build an automorphism, in fact a \emph{construction composition}: 

\begin{df}
An \emph{admissible construction composition} for a connected, $(2r-1)$-vertex graph $\mathcal{G}$ is an admissible composition $(g_{i-k}, \dots, g_i; G_{i-k-1}, \dots, G_i)$ for $\mathcal{G}$ such that
\indent{\begin{description}
\item  [(CC1)] each  $(g_j, G_{j-1}, G_j)$ with $i-k < j \leq i$ is an extension for $\mathcal{G}$ and
\item  [(CC2)] $(g_{i-k}, G_{i-k-1}, G_{i-k})$ is a switch for $\mathcal{G}$.
\end{description}}
We call the composition without the initial switch a \emph{pure construction composition}.
\end{df}

Below we include the abstract definitions necessary to make the processes of Example \ref{E:ConstructionAutomorphism} formal. However, first, since we use this procedure repeatedly in what follows, we explain how to find the construction automorphism (and intermediary ltt structures) from a construction path.

\begin{ex}{\label{E:AutomorphismFromPath}}

We start on the right with an ltt structure $G_i$ including the potential construction path (of Example \ref{E:ConstructionAutomorphism}). Since we want each triple to be an extension, we keep the purple graph the same in each $G_j$. In the ltt structure $G_{i-k}$, we attach $\bar a$ by a red edge to the terminal vertex of the edge labeled in $G_i$ by $k$. We determine $g_{i-k}$ by the red edge of $G_{i-k}$. If the red edge is $[e_{u(i-k)}, \ol{e_{a(i-k)}}]$ then, to make $(g_{i-k}; G_{i-k-1}, G_{i-k})$ a generating triple, we set $g_{i-k}=[e_{u(i-k)} \to e_{a(i-k)}e_{u(i-k)}]$.

\begin{figure}[H]
\centering
\noindent \includegraphics[width=6.5in]{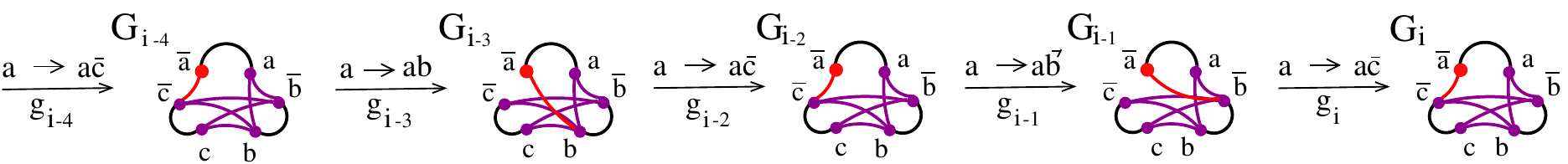} 
\label{fig:ConstructionPathLtt}
\end{figure}

\end{ex}

\begin{df}
The \emph{construction subgraph} $G_{con}$ is constructed from $G$ via the following procedure:
\begin{itemize}
\item [1.] Remove the interior of the black edge $[d^u, \overline{d^u}]$, the purple vertex $\overline{d^u}$, and the interior of any purple edges containing $\overline{d^u}$. Call the graph with these edges and vertices removed $G^1$.
\item [2.] Given $G^{j-1}$, recursively define $G^j$: Let $\{\alpha_{j-1,i} \}$ be the set of vertices in $G^{j-1}$ not contained in any colored edge of $G^{j-1}$. $G^j$ is obtained from $G^{j-1}$ by removing all black edges containing a vertex $\alpha_{j-1,i} \in \{\alpha_{j-1,i} \}$, as well as the interior of each purple edge containing a vertex $\overline{\alpha_{j-1,i}}$.
\item [3.] $G_{con} = \underset{j}{\cap} G^j$.
\end{itemize}
\end{df}

Because \cite{pII} Lemma 3.6 plays such a crucial role in ensuring (st2), we restate the lemma in the form in which we use it (still implied by its proof, combined with \cite{pI} Lemma 5.7). It gives some kind of formalization of Examples \ref{E:ConstructionAutomorphism} and \ref{E:AutomorphismFromPath}.

\begin{lem}[\cite{pII} Lemma 3.2, Lemma 3.6]{\label{L:Building}}
Let $(g_1, \dots, g_n; G_1, \dots, G_n)$ be an i.d. representative of a $\phar$ with $\iwp \cong \mG$, where $\mG$ is a connected, $(2r-1)$-vertex graph. Suppose that some $(g_{i-k}, \dots, g_i; G_{i-k-1}, \dots, G_i)$, with $1 \leq i-k \leq i \leq n$, is a construction composition. Then:
\begin{enumerate}
\item The sequence of vertices $e_{i,u(i)}, \ol{e_{i,a(i)}}, e_{i,a(i)}, \ol{e_{i,a(i-1)}}, \dots, \ol{e_{i,a(i-k)}}, e_{i,a(i-k)}$ defines a smooth path in $G_i$ (starting with the red edge $[e_{i,u(i)}, \ol{e_{i,a(i)}}]$ oriented from its red vertex to its purple vertex). We call such a path the \emph{construction path} for $(g_{i-k}, \dots, g_i; G_{i-k-1}, \dots, G_i)$.
\item Letting $H$ denote the subgraph of $G_i$ consisting of the purple edges in the construction path from $g_{i,i-k}$, we have that $D^C(g_{i,i-k})(H)$ is a subgraph of $P(G_n)$.
\end{enumerate}
\end{lem}

In \cite{pII} Lemma 3.4 it is proved that, under certain conditions, given a path in an admissible ltt structure starting with the red edge and living in the construction subgraph (we call such a path a \emph{potential construction path}), the process of Example \ref{E:AutomorphismFromPath} yields an admissible construction composition with that path as its construction path. Hence, one can look for construction compositions via paths in construction subgraphs. We do not make this statement precise here because, as we only deal with specific low-rank examples in this paper, one can check by hand in each particular circumstance whether one has obtained a construction composition via the procedure of Example \ref{E:AutomorphismFromPath}, then apply Lemma \ref{L:Building}.  

\vskip10pt

\subsection{Switch sequences}{\label{ss:SwitchSequences}}

An important tool we use to ensure our loops ``close up'' (in circumstances where we do not start with a loop in an $\id$ diagram) is a ``switch sequence.'' As with construction compositions, they can be found via paths in ltt structures and one still needs to check that every triple constructed from the path is admissible (in particular that the ltt structures are admissible).

\begin{df}
An admissible \emph{switch sequence} for a connected, $(2r-1)$-vertex graph $\mathcal{G}$ is an admissible composition $(g_{i-k}, \dots, g_i; G_{i-k-1}, \dots, G_i)$ for $\mathcal{G}$ such that
\indent{\begin{description}
\item  [(SS1)] each  $(g_j; G_{j-1}, G_j)$ with $i-k \leq j \leq i$ is a switch and
\item  [(SS2)] $a(n+1)=u(n) \neq u(l)=a(l+1)$ and $\overline{e_{a(l)}} \neq
e_{u(n)}=e_{a(n+1)}$ for each $i \geq n > l \geq i-k$.
\end{description}}
\end{df}

\begin{rk}
While (ss2) is technical, it is very important because the purple subgraph of an ltt structure changes with a switch and so, after applying a sequence of switches, the purple edge that would have determined the next switch may no longer exist in the ltt structure.
\end{rk}

``Switch paths'' are handled more formally in \cite{pII} than we need here, as we can use the ideas behind them to suggest the construction of a sequence, that we can then check by hand is an admissible switch sequence. We describe the idea behind the paths here: 

Consider an admissible switch sequence $(g_{i-k}, \dots, g_i; G_{i-k-1}, \dots, G_i)$. Recall that the red edge in each $G_j$ is [$e_{j,u(j)}, \overline{e_{j,a(j)}}$]. Because of the nature of switches, under the constraints of (ss2), each [$e_{i,u(j)}, \overline{e_{i,a(j)}}$] is a purple edge $G_i$. Since a switch $(g_j; G_{j-1}, G_j)$ satisfies that $u(j-1)=a(j)$, these purple edges piece together (with black edges between) to form a smooth path in $G_i$ determined by the sequence of vertices: $e_{i,u(i)}$, $\overline{e_{i,a(i)}}$, $e_{i,a(i)}=e_{i,u(i-1)}$, $\overline{e_{i,a(i-1)}}$, $e_{i,a(i-1)}$, $\dots$, $\overline{e_{i,a(i-k+1)}}$, $e_{i,a(i-k+1)}=e_{i,u(i-k)}$, $\overline{e_{i,a(i-k)}}$, $e_{i,a(i-k)}.$

We call this smooth path the \emph{switch path} for $(g_{i-k}, \dots, g_i; G_{i-k-1}, \dots, G_i)$. It is proved in \cite{pII}: If $(g_1, \dots, g_n, G_0, \dots, G_n)$ is an i.d. tt representative for a $\phar$, with $\iwp$ a connected, $(2r-1)$-vertex graph, and $(g_{i-k}, \dots, g_i; G_{i-k-1} \dots, G_i)$ is a switch sequence for some $1 \leq k < i \leq n$, then the associated switch path forms a smooth path in $G_i$.

Thus, one can start with a smooth path and check that it gives an admissible sequence of switches. It is worth noting, though, that (ss2) is still necessary to ensure that the purple edges in a switch path do not disappear before they can play their role as the determining edge for the switch.

\begin{ex}{\label{E:SwitchPath}} In the ltt structure $G_i$, we number the colored edges of a switch path:
~\\
\vspace{-6mm}
\begin{figure}[H]
\centering
\includegraphics[width=1in]{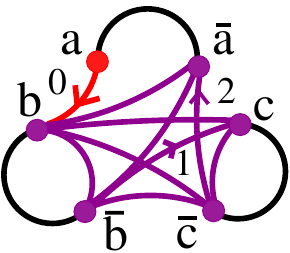}
\label{fig:SwitchPath} \\[-3mm]
\end{figure}

\noindent The switch sequence constructed from the switch path is:
~\\
\vspace{-7mm}
\begin{figure}[H]
\centering
\noindent \includegraphics[width=4.5in]{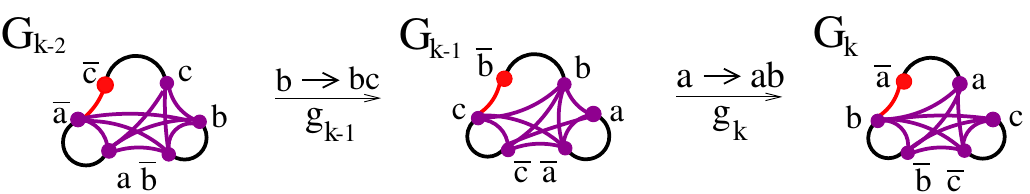}
\label{fig:SwitchSequence} \\[-3mm]
\end{figure}

The red edge in $G_k$ is (0), the red edge in $G_{k-1}$ is (1), and the red edge in $G_{k-2}$ is (2). Notice that here the purple graph changes according to the isomorphism in (swI).
\end{ex}

\subsection{Strategy I}{\label{ss:ss1}}

\begin{description}
\item [Step 1] Find a loop $L(g_1, \dots, g_n; G_0, G_1 \dots,  G_n)$ in $\idg$ so that, for each $1 \leq i \leq r$, there exists a $G_j$ such that either $E_i$ or $\ol{E_i}$ labels the red vertex in $G_j$. \newline
\indent (This can be accomplished using the $\id$ diagram itself or using a switch sequence). 
\item [Step 2] Add in construction compositions or more general loops to ensure (st2). If more general loops are added, one can check as in Example \ref{E:GraphBuilding} whether all of $\mG$ is showing up in the purple subgraphs.
\item [Step 3] Check (st3), then add in loops until irreducibility is obtained.
\item [Step 4] Take a power to ensure all periodic directions are fixed. (This may be necessary since symmetries in $\mG$ may allow a loop in $\idg$ to return to the same ltt structure node without the constructed automorphism having its periodic directions fixed.)
\item [Step 5] Check for the existence of pNP's (see Remark \ref{r:Checks}).
\end{description}

\begin{ex}{\label{E:StrategyI}}  We analyze Graph XX, starting with the switch sequence of Example \ref{E:SwitchPath}.

Our first construction composition (with construction automorphism $a \mapsto ab\bar{c}\bar{c}bbcb$) is given by the construction path in the following ltt structure:
~\\
\vspace{-5.5mm}
\begin{figure}[H]
\centering
\includegraphics[width=1.1in]{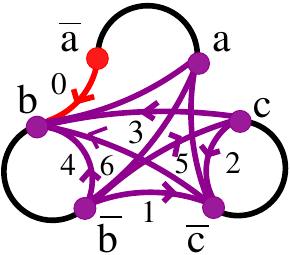}
\label{fig:ConstructionPathExample} \\[-8.5mm]
\end{figure}.

\noindent After that composition we still need:
~\\
\vspace{-6.5mm}
\begin{figure}[H]
\centering
\includegraphics[width=.8in]{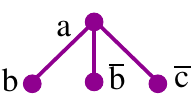}
\label{fig:WhatLeft} \\[-2.5mm]
\end{figure}

\noindent We take the preimage of edges left under the direction map for the final switch and get:
~\\
\vspace{-7.5mm}
\begin{figure}[H]
\centering
\includegraphics[width=2.1in]{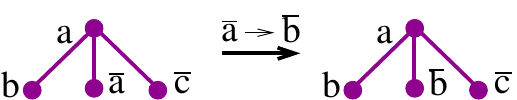}
\label{fig:preimage} \\[-2.5mm]
\end{figure}

\noindent Since we could not obtain all these edges from a single construction composition, we take another preimage (the preimage under the direction map of a second switch in the switch sequence):
~\\
\vspace{-7mm}
\begin{figure}[H]
\centering
\includegraphics[width=2.1in]{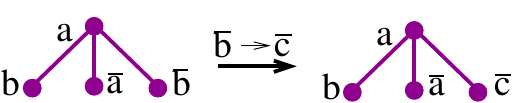}
\label{fig:anotherpreimage} \\[-2.5mm]
\end{figure}

\noindent We use the construction composition for the following construction path to obtain these edges:
~\\
\vspace{-7mm}
\begin{figure}[H]
\centering
\includegraphics[width=1.2in]{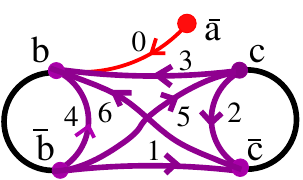}
\label{fig:constructionpath} \\[-3.5mm]
\end{figure}

\noindent When composed we get:
\begin{figure}[H]
\centering
\includegraphics[width=5.15in]{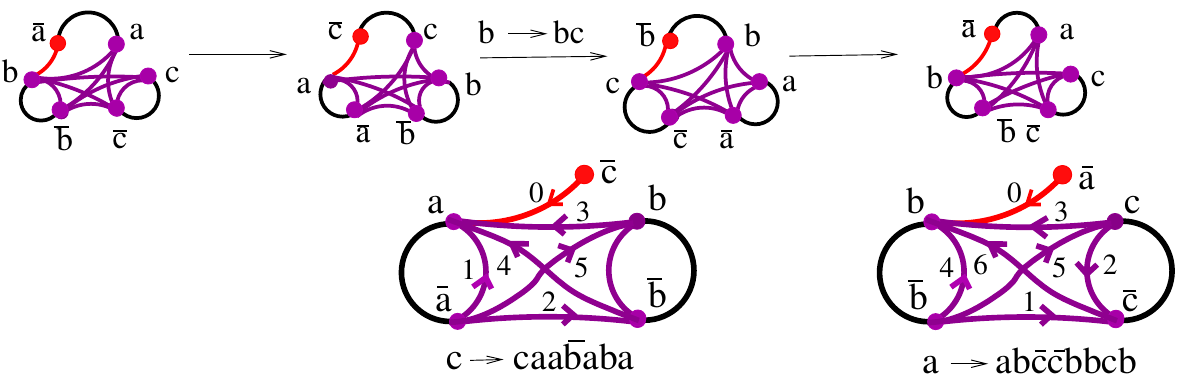}
\label{fig:ConstructionCompositions} 
\end{figure}

\noindent The automorphism obtained is:

$$
\Phi =
\begin{cases} a \mapsto ab \bar{c} \bar{c} bbcb \\
b \mapsto bc \\
c \mapsto cab \bar{c} \bar{c} bbcbab \bar{c} \bar{c} bbcb \bar{c} \bar{c} \bar{b} ab \bar{c} \bar{c} bbcbbccab \bar{c} \bar{c} bbcb
\end{cases}
$$

\noindent Since the periodic directions for this map are not fixed, we take $\Phi^2$, so that the representative is $g_{\Phi^2}$.
\end{ex}

\vskip10pt

\subsection{Strategy II}{\label{ss:ss1}}

In cases where most ``determining'' edges yield an admissible switch and an admissible extension, the $\id$ diagram can be large and impractical to construct. On the other hand, in these cases, a potential construction path is likely to lead to a construction composition and a potential switch path is likely to lead to a switch sequence. We thus present a second strategy (``Strategy II'') for this circumstance.

\begin{df}[Preimage subgraph]{\label{d:PreimageSubgraph}} For an admissible map ($g_{(k,m)}$; $G_{m-1}$, \dots, $G_k$), the \emph{preimage subgraph} under ($g_{(k,m)}$; $G_{m-1}$, \dots, $G_k$) for a subgraph $H \subset P(G_i)$ is obtained from $H$ by replacing each edge of $H$ with its preimage under the isomorphism from $P(G_{m-1})$ to $P(G_k)$.
\end{df}

\noindent The following is \emph{Strategy II}:
\begin{description}
\item [Step 1] Choose an admissible ltt structure G for $\mG$, determine $G_{con}$, and find a potential construction path $p$ in $G_{con}$ (hopefully traversing as many distinct purple edges as possible).
\item [Step 2] Carry out the procedure of Example \ref{E:AutomorphismFromPath} and check whether this gives a construction composition.
\item [Step 3] Repeat Step 1 and Step 2 until a construction composition is obtained from a construction path $p$.
\item [Step 4] Let $H$ denote the subgraph of $G$ consisting of purple edges not hit by $p$, i.e. $H=\mP(G)-\mP(G)\cap p$.
\item [Step 5] Perform admissible switches and extensions (each time taking the preimage of $H$, as in Definition \ref{d:PreimageSubgraph}) until an admissible ltt structure $G'$ is reached whose construction subgraph $G'_{con}$ contains a path $p'$ with edges from the preimage of $H$.
\item [Step 6] Recursively apply Steps 1-5 to the new $G'$ and $p'$ until there are no edges left to construct.
\item [Step 7] Close up the loop either by a switch sequence ending with $G$ or by applying admissible moves until one reaches $G$. (One can note that this process is in fact finite, as each ltt structure only has finitely many potential ``determining'' edges, each ``determining'' edge can lead to at most one admissible switch and one admissible extension, and there are only finitely many admissible ltt structures for a given connected, $(2r-1)$-vertex graph $\mG$.)
\item [Step 8] Take a power to ensure all periodic directions are fixed.
\item [Step 9] Check for the existence of pNP's (see Remark \ref{r:Checks}).

\begin{ex}{\label{E:StrategyI}} 
We show how to apply Strategy II to obtain Graph XIII. It should be noted that we add an extra edge in the first construction path so that, before a switch is chosen for the construction composition, the initial and final ltt structures (the first and last of the five graphs depicted below) are the same. While unnecessary for the procedure, this makes recording the example in short-hand simpler.

\noindent \includegraphics[width=5.5in]{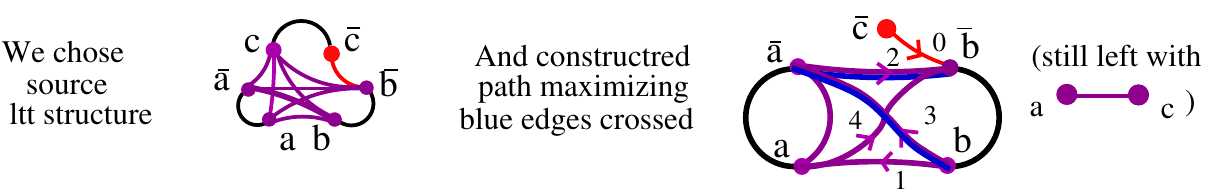} 

\noindent \includegraphics[width=5.3in]{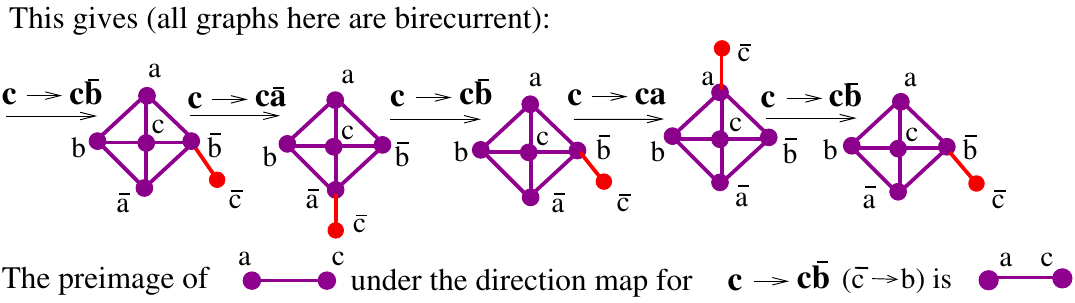}

\vskip5pt

\noindent \includegraphics[width=6.2in]{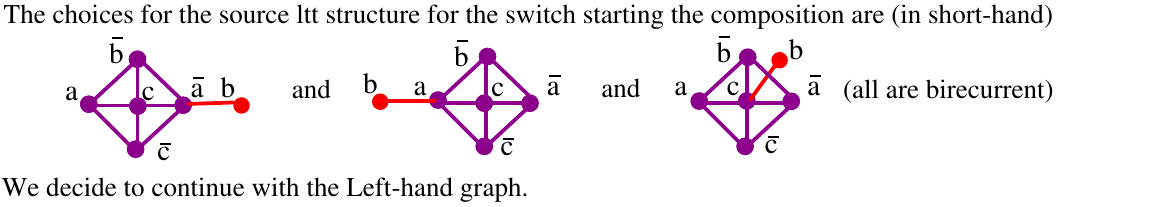}

\vskip5pt

\noindent \includegraphics[width=5.5in]{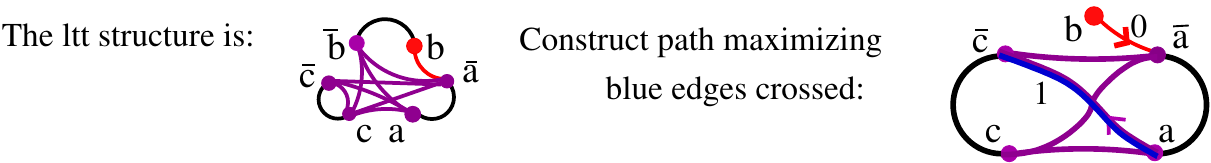}

\vskip5pt

\noindent \includegraphics[width=4.8in]{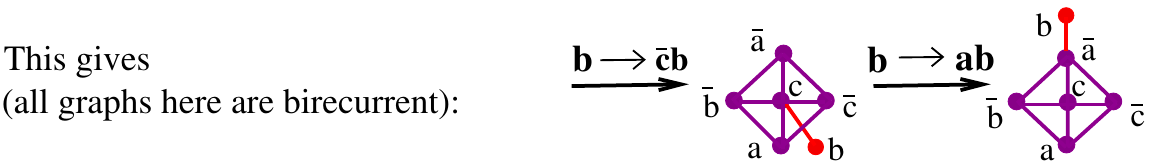}

\noindent We close up the loop with:

~\\
\vspace{-8mm}
\begin{figure}[H]
\centering
\noindent \includegraphics[width=3in]{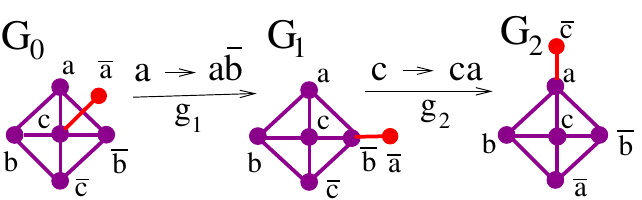}
\label{fig:FinalGenerators} 
\end{figure}

\noindent We have the final map and get the entire representative for Graph XIII:
~\\
\vspace{-5mm}
\noindent \begin{figure}[H]
\centering
\includegraphics[width=5.5in]{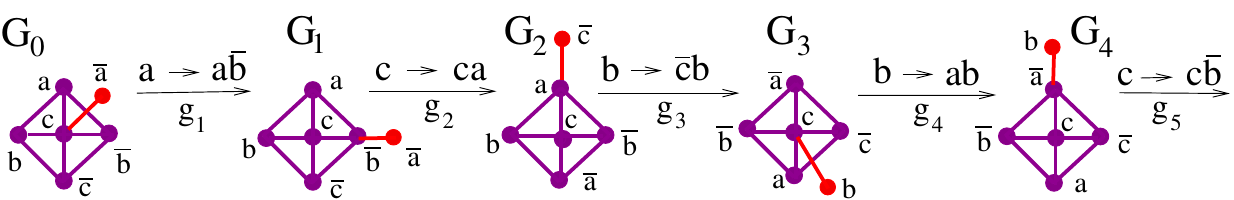}
\label{fig:NPAlgorithmExample1}
\end{figure}
~\\
\vspace{-10mm}
\noindent \begin{figure}[H]
\centering
\includegraphics[width=5.5in]{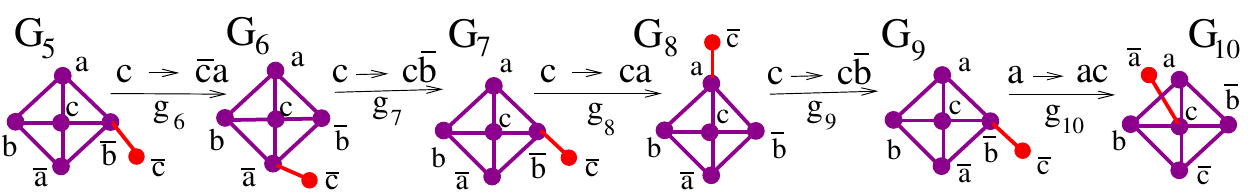}
\label{fig:NPAlgorithmExample2}
\end{figure}
~\\
\vspace{-10mm}
\noindent \begin{figure}[H]
\centering
\includegraphics[width=5.5in]{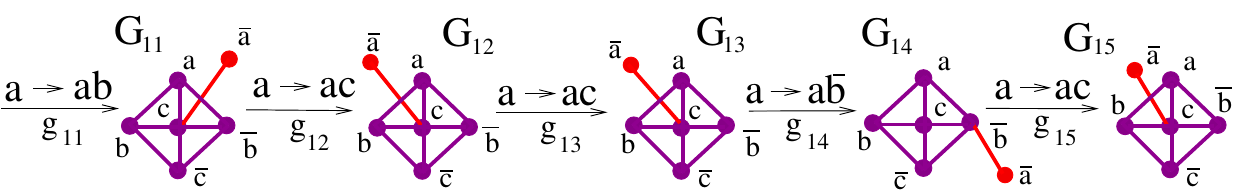}
\label{fig:NPAlgorithmExample3} 
\end{figure}

\noindent It can be noted that we showed that this map does not have any pNPs in \cite{pI}. 
\end{ex}

\end{description}

\vskip5pt

\section{Achievable Graphs in Rank 3}{\label{Ch:Achievable}}

This section includes our main theorem. The theorem gives a refinement of the achievability of the index list $(-\frac{3}{2})$ by fully irreducible $\phi \in Out(F_3)$.

\begin{thm}[Theorem A]{\label{T:MainTheorem}}  Precisely eighteen of the twenty-one connected, simplicial five-vertex graphs are the ideal Whitehead graph $\mathcal{IW}(\phi)$ for a fully irreducible outer automorphism $\phi\in Out(F_3)$. \end{thm}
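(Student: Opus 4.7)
The plan is to split the theorem into a negative part (three graphs are not achieved) and a positive part (the other eighteen are achieved), and to treat each part by entirely different machinery. For the negative part, I would cite directly the result stated at the end of the introduction: Graphs II, V, and VII were already shown in \cite{p12b} to be unachievable as $\mathcal{IW}(\phi)$ for any $\phi\in\mathcal{AFI}_3$. Combined with the standing assumption that any achieving $\phi$ must be ageometric (since the index sum $-\sfrac{3}{2}$ is strictly greater than $1-r=-2$ and excludes geometrics by the Masur--Smillie classification and the discussion around Question \ref{Q:GJLL}), this disposes of three of the twenty-one graphs immediately.

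For the positive part, I would proceed one graph at a time through the remaining eighteen members of $\mathcal{PI}_{(3;(-\sfrac{3}{2}))}$ and, for each $\mathcal{G}$, exhibit an ideally decomposed train track map $g_\mathcal{G}\colon R_3\to R_3$ representing some ageometric fully irreducible $\phi\in Out(F_3)$ with $\mathcal{IW}(\phi)\cong\mathcal{G}$. To find $g_\mathcal{G}$ I would apply the three categories of construction strategies laid out in Section \ref{Ch:Procedure}: for graphs whose $\mathcal{ID}$ diagram is small (few birecurrent ltt structures $G$ with $\mathcal{PI}(G)=\mathcal{G}$), I would use Strategy I, first running the Irreducibility Potential Test and then searching for a loop in $\mathcal{ID}(\mathcal{G})$; for graphs with a large $\mathcal{ID}$ diagram, I would use Strategies II and III, piecing together construction compositions (via potential construction paths in the construction subgraph $G_C$) and switch sequences (via switch paths), monitoring the accumulated subgraph $G^a_k$ until $G^a_N=\mathcal{PI}(G_{j_N})$.

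After producing a candidate $g_\mathcal{G}$ for each $\mathcal{G}$, I would carry out the Final Checks of Subsection \ref{S:FinalChecks}: (i) verify every edge pair $\{d_i,\overline{d_i}\}$ supplies a red vertex at some stage (attaching small loops or an auxiliary switch sequence if not); (ii) replace $g_\mathcal{G}$ by a power so that all $2r-1$ periodic directions are fixed, making the representative rotationless; (iii) apply the pNp-identification procedure of \cite{p12c} to confirm absence of periodic Nielsen paths; and (iv) use the tracking method of Example \ref{E:GraphBuilding} together with the images $Dg_{k+1,n}(t^R_k)$ of the red edges to confirm that the full graph $\mathcal{G}$ is built. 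Once (i)--(iv) succeed, Proposition \ref{P:RepresentativeLoops} together with the Full Irreducibility Criterion (computing the transition matrix and checking it is Perron--Frobenius, and checking that all local Whitehead graphs are connected) delivers that $g_\mathcal{G}$ represents an ageometric fully irreducible $\phi$ with $\mathcal{IW}(\phi)\cong\mathcal{G}$.

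The main obstacle will be the sheer case analysis: eighteen separate constructions, each requiring a different blend of the three strategies and each demanding a pNp-freeness verification that can be delicate even with the \cite{p12c} procedure. In particular, the graphs that are close to the three unachievable ones (thin, sparsely connected graphs with few potential construction paths and few birecurrent ltt structures) are likely to be the hardest, since there the construction subgraph $G_C$ is small and few construction paths are available, forcing careful use of Strategy I with the full $\mathcal{ID}(\mathcal{G})$; conversely, the dense graphs will generate large $\mathcal{ID}$ diagrams where Strategies II and III are essential, and ensuring Perron--Frobenius of the transition matrix and absence of pNps for the resulting long words is the delicate final step.
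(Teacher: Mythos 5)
Your proposal follows essentially the same route as the paper: the three unachievable graphs (II, V, VII) are disposed of by citing \cite{p12b}, and the remaining eighteen are handled by constructing ideally decomposed train track representatives on the rose via the strategies of Section \ref{Ch:Procedure}, then verifying pNp-freeness, Perron--Frobenius transition matrices, and realization of the full graph before invoking Proposition \ref{P:RepresentativeLoops} and the Full Irreducibility Criterion. The only difference is one of completeness rather than method: the paper's proof consists precisely in exhibiting the eighteen explicit automorphisms and their ideal decompositions, whereas your write-up describes the search-and-check procedure that produces them without recording the maps themselves.
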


\begin{proof} Graphs II, V, and VII were proved unachievable in \cite{pI}. We give representatives for the remaining graphs, leaving it to the reader to prove they are pNP-free (see Remark \ref{r:Checks}), that they satisfy Lemma \ref{l:RepresentativeLoops}B, and have the appropriate ideal Whitehead graphs. Then, by Lemma \ref{l:RepresentativeLoops}, they are representatives of $\phi \in \mathcal{A}_r$ with the desired ideal Whitehead graphs.

For each achieved graph, we give a representative $g$ achieving it and then an ideal decomposition for $g$. When showing the ideal decomposition, in most cases, we leave out the black edges in the ltt structures. For Graphs X, XII, XV, and XIX we give a condensed description of the ideal decomposition where a pure construction composition starting and ending at an ltt structure $G_i$ is shown below as a path in $(G_i)_{con}$. For graphs XI and XVI, the pure construction compositions do not start and end with the same ltt structure, so are depicted as paths in $(G_i)_{con}$ below, but between, their initial and terminal ltt structures.

\smallskip

\noindent Graph I (The Line):
$$
\Phi =
\begin{cases} a \mapsto ac \bar{b} ca \bar{b} cacac \bar{b} ca  \\
b \mapsto \bar{a} \bar{c} b \bar{c} \bar{a} \bar{c} \bar{a} \bar{c} b  \\
c \mapsto cac \bar{b} ca \bar{b} cac
\end{cases}
$$
\begin{figure}[H]
\centering
\noindent \includegraphics[width=6.5in]{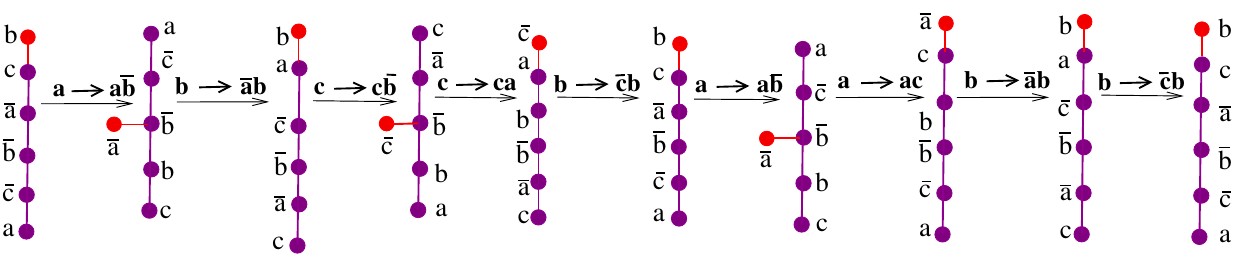}
\end{figure}

\noindent Graph III:
$$
\Phi =
\begin{cases} a \mapsto a \bar{b} ca  \\
b \mapsto b \bar{a} \bar{c} \bar{a} \bar{c} \bar{c} \bar{a} \bar{c}  \\
c \mapsto caccaca \bar{b} cac
\end{cases}
$$
\begin{figure}[H]
\centering
\noindent \includegraphics[width=6.2in]{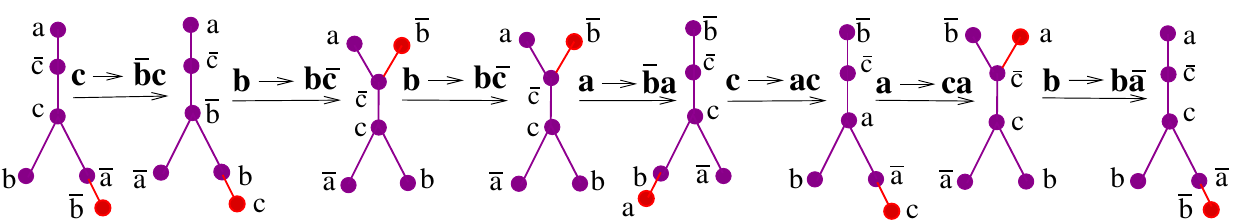}
\end{figure}

\noindent Graph IV:
$$
\Phi =
\begin{cases} a \mapsto c \bar{b} a  \\
b \mapsto bc \bar{a} bcb  \\
c \mapsto c \bar{b} abc
\end{cases}
$$
\begin{figure}[H]
\centering
\noindent  \includegraphics[width=6in]{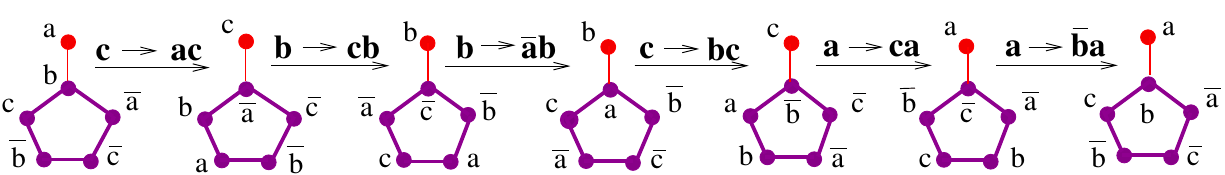}
\end{figure}

\noindent Graph VI:
$$
\Phi =
\begin{cases} a \mapsto abacbaba \bar{c} abacbaba  \\
b \mapsto ba \bar{c}  \\
c \mapsto c \bar{a} \bar{b} \bar{a} \bar{b} \bar{a} \bar{b} \bar{c} \bar{a} \bar{b} \bar{a} c  \end{cases}
$$
\begin{figure}[H]
\centering
\noindent \includegraphics[width=6.85in]{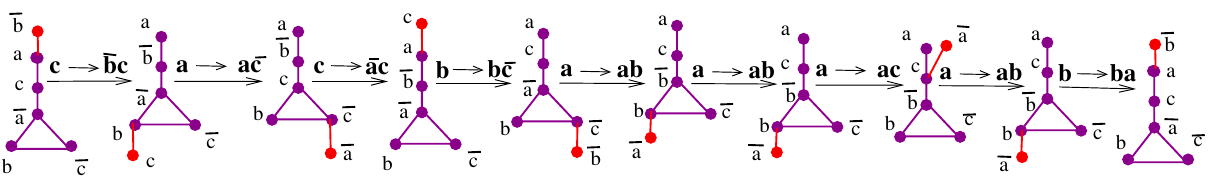}
\end{figure}

\noindent Graph VIII:
$$
\Phi =
\begin{cases} a \mapsto a \bar{c} aa \bar{b} a \bar{c} b \bar{a} \bar{a} ca \bar{c} aa \bar{b} a \bar{c} a  \\
b \mapsto b \bar{a} \bar{a} c  \\
c \mapsto c \bar{a} b \bar{a} \bar{a} c \bar{a} b \bar{a} \bar{a} c
\end{cases}
$$
\begin{figure}[H]
\centering
\noindent \includegraphics[width=4.4in]{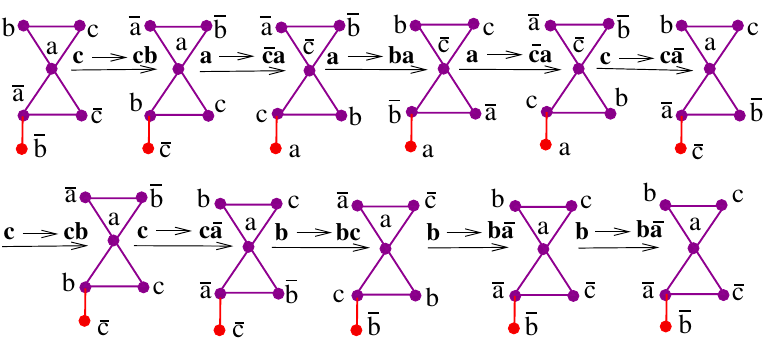}
\end{figure}

\noindent Graph IX:
$$
\Phi =
\begin{cases} a \mapsto ab \bar{c} b \bar{c} ab \bar{c}b \bar{c} \bar{b} c \bar{b} \bar{a}\bar{c} \bar{b} ab \bar{c} b \bar{c} \\
b \mapsto bcab \bar{c} bc \bar{b} c \bar{b} \bar{a} cab \bar{c} b  \\
c \mapsto c \bar{b} c \bar{b} \bar{a} bcab \bar{c} bc \bar{b} c \bar{b} \bar{a} c
\end{cases}
$$
\begin{figure}[H]
\centering
\noindent \includegraphics[width=6.5in]{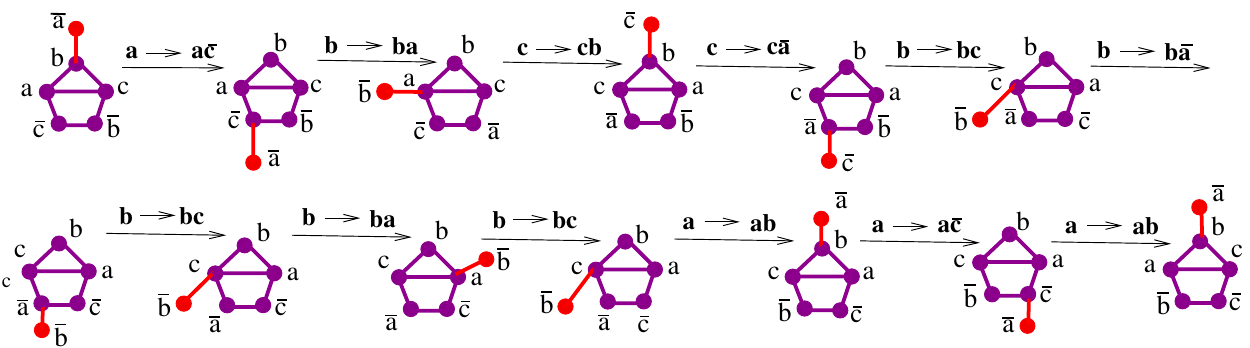}
\end{figure}

\noindent Graph X:
$$
\Phi =
\begin{cases} a \mapsto abacbabac \bar{a} b \bar{c} \bar{a} \bar{b} \bar{a} \bar{c} \bar{a} \bar{b} \bar{a} \bar{b} \bar{c} \bar{a} \bar{b} \bar{a} babac \bar{b} abacbabacabac \bar{b} a  \\
b \mapsto babac \bar{a} b \bar{c} \bar{a} \bar{b} \bar{a} \bar{c} \bar{a} \bar{b} \bar{a} \bar{b} \bar{c} \bar{a} \bar{b} \bar{a} babac \bar{a} b \bar{c} \bar{a} \bar{b} \bar{a} \bar{c} \bar{a} \bar{b} \bar{a} \bar{b} \bar{c} \bar{a} \bar{b} \bar{a} b  \\
c \mapsto babac \bar{a} b \bar{c} \bar{a} \bar{b} \bar{a} \bar{c} \bar{a} \bar{b} \bar{a} \bar{b} \bar{c} \bar{a} \bar{b} \bar{a} babac \bar{a} b \bar{c} \bar{a} \bar{b} \bar{a} \bar{c} \bar{a} \bar{b} \bar{a} \bar{b} \bar{c} \bar{a} \bar{b} \bar{a} babacbabac \bar{a} b \bar{c} \bar{a} \bar{b} \bar{a} \bar{c} \bar{a} \bar{b} \bar{a} \bar{b} \bar{c} \bar{a} \bar{b} \bar{a} babac
\end{cases}
$$
\begin{figure}[H]
\centering
\noindent \includegraphics[width=6.2in]{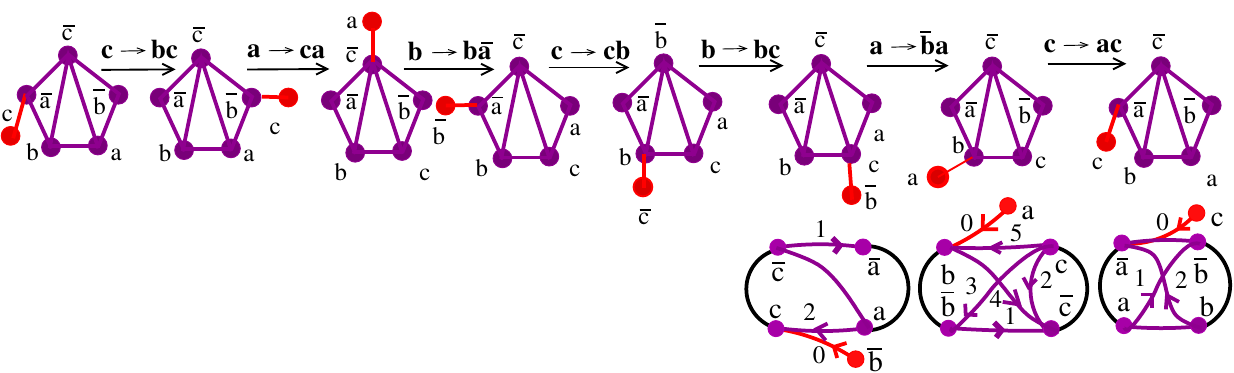}
\end{figure}

\noindent Graph XI:
$$
\Phi =
\begin{cases} a \mapsto a \bar{c} \bar{b} \bar{c} b \bar{c} \bar{b} c \bar{b} cbc \bar{a} bc \bar{b} cbc \bar{a} \bar{b} c \bar{b}  \\
b \mapsto b \bar{c} ba \bar{c} \bar{b} \bar{c} b \bar{c} \bar{b} a \bar{c} \bar{b} \bar{c} b \bar{c} b  \\
c \mapsto c \bar{b} cbc \bar{a} \bar{b} c \bar{b} cbc \bar{a} bc \bar{b} cbc \bar{a} \bar{b} c  \end{cases}
$$
\begin{figure}[H]
\centering
\noindent \includegraphics[width=5.8in]{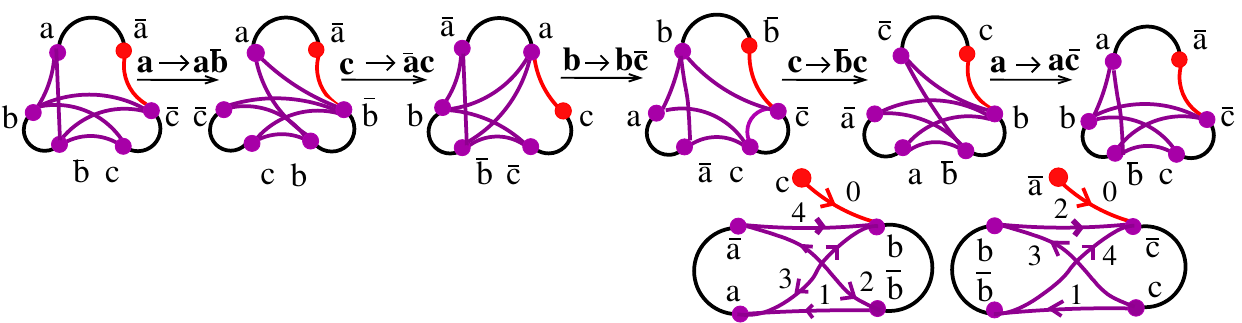}
\end{figure}

\noindent Graph XII:
$$
\Phi =
\begin{cases} a \mapsto a \bar{c} \bar{b} \bar{b} \bar{c} b \bar{c} \bar{b} c \bar{b} cbcc \bar{a} bc \bar{b} cbcc \bar{a} \bar{b} c \bar{b}  \\
b \mapsto b \bar{c} ba \bar{c}\bar{c} \bar{b} \bar{c} b \bar{c} \bar{b} a \bar{c} \bar{c} \bar{b} \bar{c} b \bar{c} b  \\
c \mapsto c \bar{b} cbcc \bar{a} \bar{b} c \bar{b} cbcc \bar{a} bc \bar{b} cbcc \bar{a} \bar{b} c  \end{cases}
$$
\begin{figure}[H]
\centering
\noindent \includegraphics[width=6.5in]{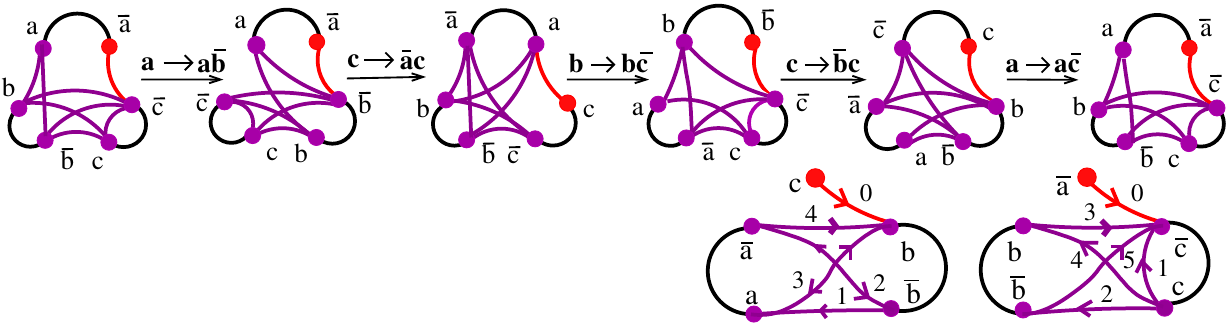}
\end{figure}

\noindent Graph XIII:
$$
\Phi =
\begin{cases} a \mapsto ac \bar{b} ccbc \bar{b} \bar{c} \bar{b} \bar{c} \bar{c} b \bar{c} \bar{a} c \bar{b} ac \bar{b} ccbc \bar{b} \bar{c} \bar{b} \bar{c} \bar{c} b \bar{c} \bar{a} \bar{b}  \\
b \mapsto bac \bar{b} ccbcb \bar{c} \bar{b} \bar{c} \bar{c} b \bar{c} \bar{a} b \bar{c} ac \bar{b} ccbcb  \\
c \mapsto c \bar{b} ac \bar{b} ccbc \bar{b} \bar{c} \bar{b} \bar{c} \bar{c} b \bar{c} \bar{a} \bar{b} ac \bar{b} ccbc
\end{cases}
$$

\smallskip

\noindent Our ideal decomposition for this representative and further explanation were given in Example \ref{E:GraphXIII}.

\vskip15pt

\noindent Graph XIV:
$$
\Phi =
\begin{cases} a \mapsto abcaabca\bar{b}abcaca\bar{b}abcaabcaabca
\bar{b}abcaca\bar{b}abcaabcaabca\bar{b}abcaabcaabca\bar{b}abca
ca\bar{b}abcab\bar{a}\bar{c}abc  \\
aababcaabca\bar{b}abcaca\bar{b}abcaa
bcaabca\bar{b}abcaca\bar{b}
abcaabcaabca\bar{b}  
abcaabcaabca\bar{b}abcaca\bar{b}abca  \\
b \mapsto b\bar{a}\bar{c}abcaab\bar{a}\bar{c}
\bar{b}\bar{a}b\bar{a}\bar{c}\bar{b}
\bar{a}\bar{a}\bar{c}\bar{b}\bar{a}\bar{a}\bar{c} 
\bar{b}\bar{a}b\bar{a}\bar{c}\bar{a}\bar{c}\bar{b}\bar{a}b
\bar{a}\bar{c}\bar{b}\bar{a}\bar{a}  
\bar{c}\bar{b}\bar{a}\bar{a}\bar{c}\bar{b}\bar{a}b\bar{a}
\bar{c}\bar{b}\bar{a}\bar{a}\bar{c}\bar{b}\bar{a}\bar{a}\bar{c}
\bar{b}\bar{a}b\bar{a}\bar{c}abcaabca\bar{b}abcaca\bar{b}  \\
abcaabcaabca\bar{b}abcaca\bar{b}abcaabcaabca\bar{b}abcaabcaa
bca\bar{b}abcaca\bar{b}abcab\bar{a}\bar{c}abcaab  \\
c \mapsto ca\bar{b}abcaabcaabca\bar{b}abcaabcaabca
\bar{b}abcaca\bar{b}abcaa
bcaabca\bar{b}abca\bar{b}\bar{a}\bar{a}\bar{c}\bar{b}\bar{a}ca\bar{b}
\end{cases}
$$
\begin{figure}[H]
\centering
\noindent \includegraphics[width=6.5in]{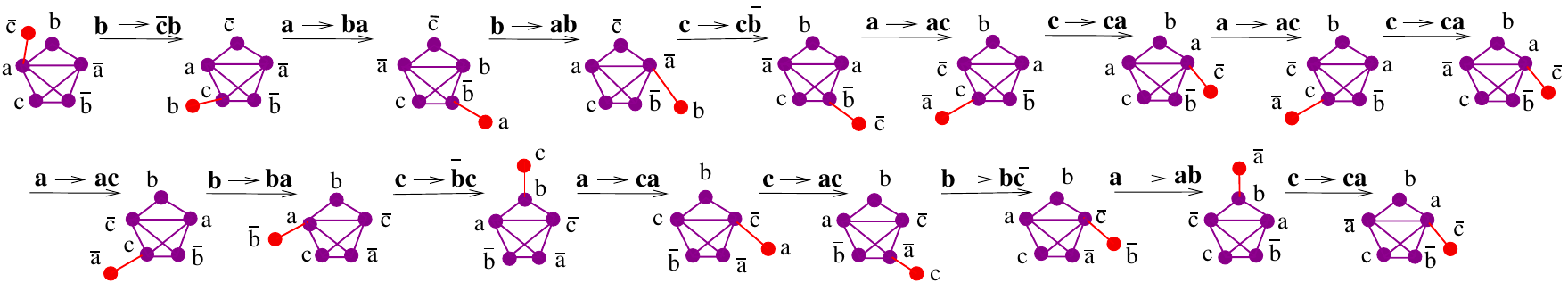}
\end{figure}

\noindent Graph XV:
$$
\Phi =
\begin{cases} a \mapsto a \bar{c} \bar{b} \bar{b} \bar{c} b \bar{c} \bar{b} c \bar{b} cbbc \bar{a} bc \bar{b} cbbc \bar{a} \bar{b} c \bar{b}  \\
b \mapsto b \bar{c} ba \bar{c} \bar{b} \bar{b} \bar{c} b \bar{c} \bar{b} a \bar{c} \bar{b} \bar{b}  \bar{c} b \bar{c} b  \\
c \mapsto c \bar{b} cbbc \bar{a} \bar{b} c \bar{b} cbbc \bar{a} bc \bar{b} cbbc \bar{a} bc  \end{cases}
$$
\begin{figure}[H]
\centering
\noindent \includegraphics[width=6in]{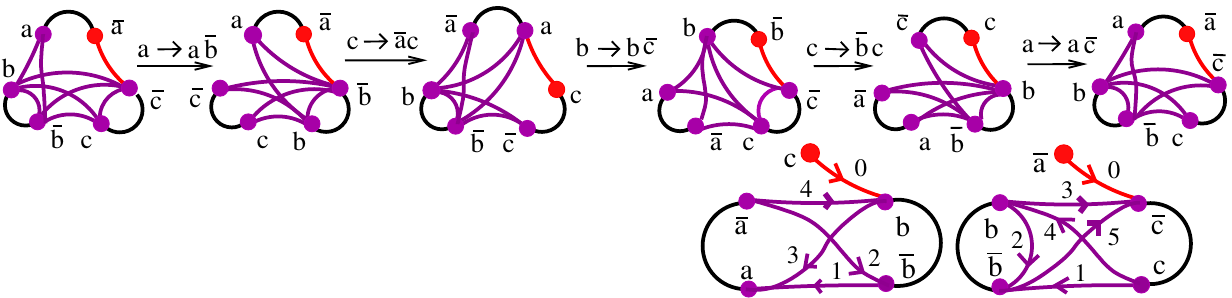}
\end{figure}

\noindent Graph XVI:
$$
\Phi =
\begin{cases} a \mapsto a \bar{b} ccbc \bar{b} c  \\
b \mapsto b \bar{c} \bar{b} \bar{c} \bar{c} b \bar{a} \bar{c} b  \\
c \mapsto ca \bar{b} ccbc \bar{b} c
\end{cases}
$$
\begin{figure}[H]
\centering
\noindent \includegraphics[width=5in]{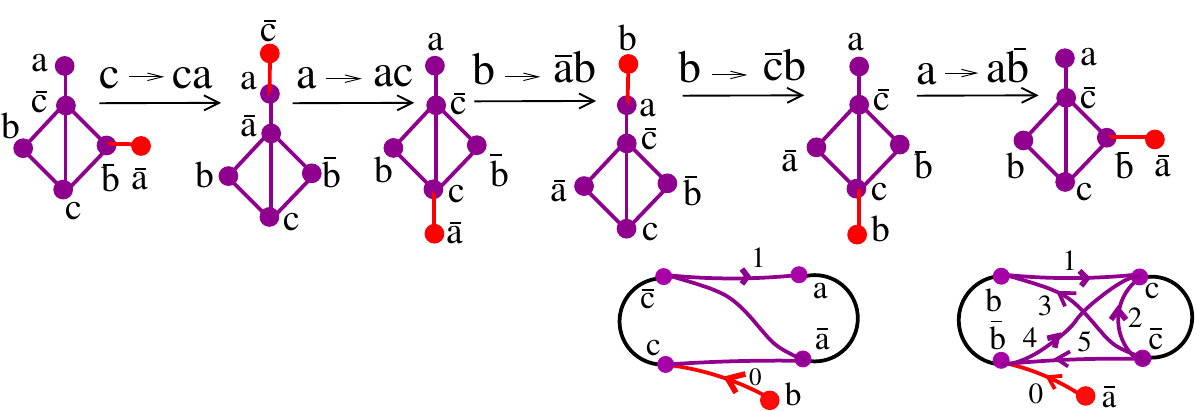}
\end{figure}

\noindent Graph XVII:
$$
\Phi =
\begin{cases} a \mapsto acbc\bar{b} c\bar{b} acbc\bar{b} acbc  \\
b \mapsto b\bar{c} \bar{b} \bar{c} \bar{a} b\bar{c} \bar{c} \bar{b} \bar{c} \bar{a} b\bar{c} \bar{b} \bar{c} \bar{a} b\bar{c} b  \\
c \mapsto c\bar{b} acbc\bar{b} \bar{b} c\bar{b} acbc\bar{b} acbcc\bar{b} acbc\bar{b} acbc\bar{b} c\bar{b} acbc\bar{b} acbc
\end{cases}
$$
\begin{figure}[H]
\centering
\noindent \noindent \includegraphics[width=5.6in]{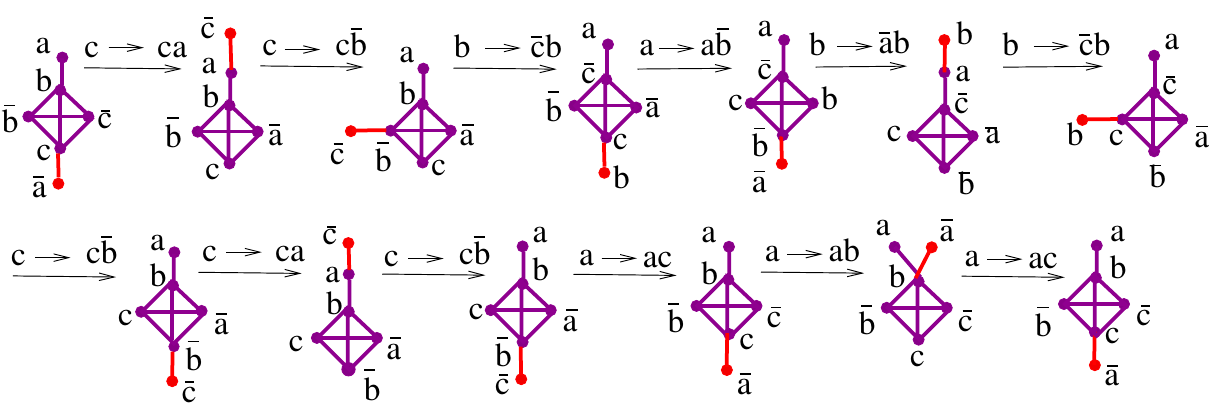}
\end{figure}

\noindent Graph XVIII:
$$
\Phi =
\begin{cases} a \mapsto a \bar{b} c \bar{b} a \bar{b}  \bar{c}  \\
b \mapsto b \bar{a} b  \bar{c} b \bar{a} b  \bar{a} b  \\
c \mapsto cb \bar{a} b \bar{c} b \bar{a} b \bar{a} b \bar{c} b \bar{a} b \bar{a} bc
\end{cases}
$$
\begin{figure}[H]
\centering
\noindent \includegraphics[width=5.5in]{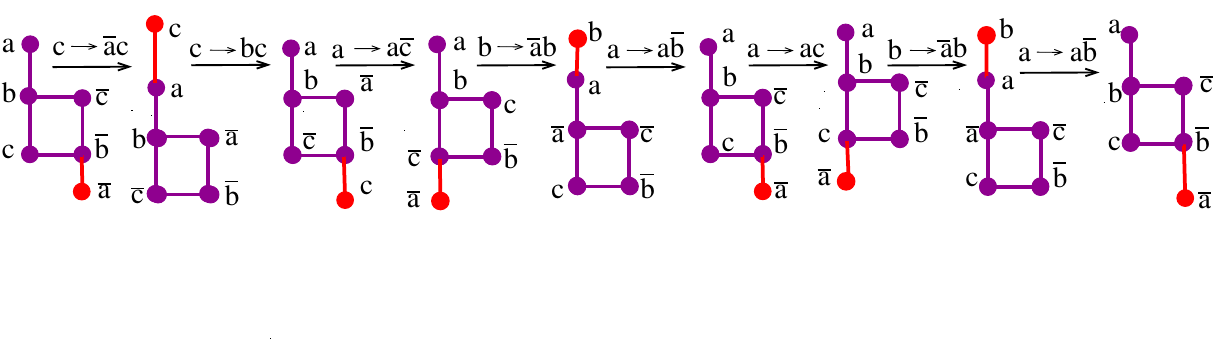} 
\end{figure}

\noindent Graph XIX:
$$
\Phi =
\begin{cases} a \mapsto acc \bar{b} cbc  \\
b \mapsto b \bar{c} \bar{b} \bar{c} b \bar{c} \bar{c} \bar{a} b \bar{c} b  \\
c \mapsto c \bar{b} acc \bar{b} cbc \bar{b} acc \bar{b} cbc
\end{cases}
$$
\begin{figure}[H]
\centering
\noindent \includegraphics[width=5.2in]{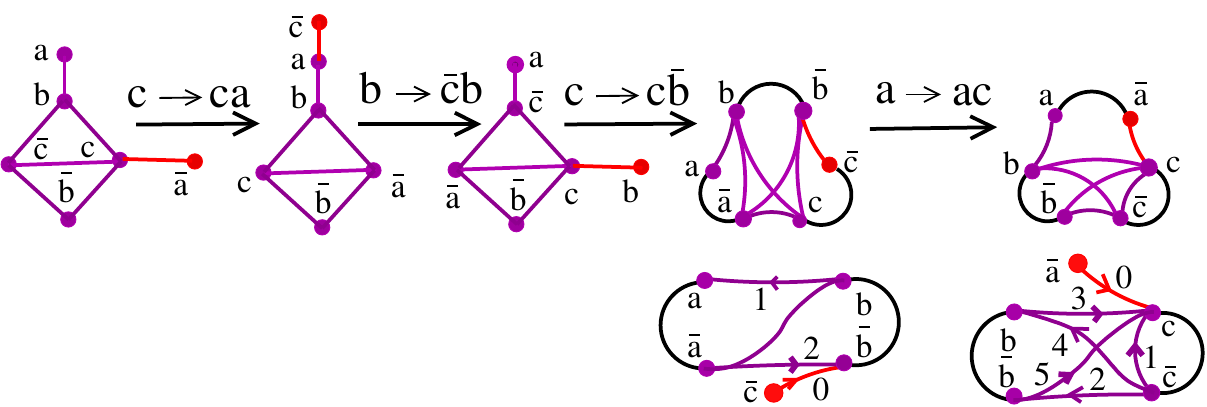}
\end{figure}

\noindent Graph XX:

\vskip7pt

\noindent The representative $g=h_{\Phi}^2$ having ideal Whitehead graph GRAPH XX, where
$$
\Phi =
\begin{cases} a \mapsto ab \bar{c} \bar{c} bbcb  \\
b \mapsto bc  \\
c \mapsto cab \bar{c} \bar{c} bbcbab \bar{c} \bar{c} bbcb \bar{c} \bar{c} \bar{b} ab \bar{c} \bar{c} bbcbbccab \bar{c} \bar{c} bbcb
\end{cases},
$$
was constructed in the examples above.

\vskip15pt

\noindent Graph XXI (Complete Graph): This was given in \cite{pII}.

$$
\Phi =
\begin{cases} a \mapsto aba \bar{b} aac \bar{b} aba \bar{b} aacbaba \bar{b} aacaba \bar{b} aac \bar{b} a  \\
b \mapsto baba \bar{b} aac \bar{a} b \bar{c} \bar{a} \bar{a} b \bar{a} \bar{b} \bar{a} \bar{c} \bar{a} \bar{a} b \bar{a} \bar{b} \bar{a} \bar{b} \bar{c} \bar{a} \bar{a} b \bar{a} \bar{b} \bar{a} b  \\
c \mapsto aba \bar{b} aac
\end{cases}
$$

\vskip15pt

\noindent \qedhere
\end{proof}

\vskip35pt


\bibliographystyle{amsalpha}
\bibliography{BirecurrencyCondition}

\newcommand{\etalchar}[1]{$^{#1}$}
\providecommand{\bysame}{\leavevmode\hbox to3em{\hrulefill}\thinspace}
\providecommand{\MR}{\relax\ifhmode\unskip\space\fi MR }
\providecommand{\MRhref}[2]{%
  \href{http://www.ams.org/mathscinet-getitem?mr=#1}{#2}
}
\providecommand{\href}[2]{#2}
\begin{thebibliography}{ABH{\etalchar{+}}06}

\bibitem[ABH{\etalchar{+}}06]{abhs06}
Pierre Arnoux, Val{\'e}rie Berth{\'e}, Arnaud Hilion, Anne Siegel, et~al.,
  \emph{Fractal representation of the attractive lamination of an automorphism
  of the free group}, Annales de l'institut Fourier, vol.~56, Chartres:
  L'Institut, 1950-, 2006, pp.~2161--2212.

\bibitem[Bes12]{b12}
M.~Bestvina, \emph{{PCMI} {L}ectures on the geometry of {O}uter space},
  {\url{http://www.math.utah.edu/pcmi12/lecture_notes/bestvina.pdf}} (2012).

\bibitem[BFH97]{bfh97}
M.~Bestvina, M.~Feighn, and M.~Handel, \emph{Laminations, trees, and
  irreducible automorphisms of free groups}, Geometric and Functional Analysis
  \textbf{7} (1997), no.~2, 215--244.

\bibitem[BFH00]{bfh00}
\bysame, \emph{The {T}its alternative for {${\rm Out}(F_n)$} {I}: {D}ynamics of
  exponentially-growing automorphisms}, Annals of Mathematics-Second Series
  \textbf{151} (2000), no.~2, 517--624.

\bibitem[BH92]{bh92}
M.~Bestvina and M.~Handel, \emph{Train tracks and automorphisms of free
  groups}, The Annals of Mathematics \textbf{135} (1992), no.~1, 1--51.

\bibitem[CH10]{ch10}
T.~Coulbois and A.~Hilion, \emph{Rips induction: {I}ndex of the dual lamination
  of an {R}-tree.}, arXiv preprint arXiv:1002.0972 (2010).

\bibitem[CH12]{ch12}
\bysame, \emph{Botany of irreducible automorphisms of free groups}, Pacific
  Journal of Mathematics \textbf{256} (2012), no.~2.

\bibitem[CHL08]{chl08I}
T.~Coulbois, A.~Hilion, and M.~Lustig, \emph{R-trees and laminations for free
  groups {I}: algebraic laminations}, Journal of the London Mathematical
  Society \textbf{78} (2008), no.~3, 723--736.

\bibitem[CP84]{cp84}
Drago{\v{s}} Cvetkovi{\'c} and Milenko Petri{\'c}, \emph{A table of connected
  graphs on six vertices}, Discrete Mathematics \textbf{50} (1984), 37--49.

\bibitem[CV86]{cv86}
M.~Culler and K.~Vogtmann, \emph{Moduli of graphs and automorphisms of free
  groups}, Inventiones mathematicae \textbf{84} (1986), no.~1, 91--119.

\bibitem[EMR12]{emr12}
A.~Eskin, M.~Mirzakhani, and K.~Rafi, \emph{Counting closed geodesics in
  strata}, arXiv preprint arXiv:1206.5574 (2012).

\bibitem[FH11]{fh11}
Mark Feighn and Michael Handel, \emph{The {R}ecognition {T}heorem for {${\rm
  Out}(F_n)$}}, Groups Geom. Dyn. \textbf{5} (2011), no.~1, 39--106.
  \MR{2763779 (2012b:20061)}

\bibitem[FM11]{fm11}
B.~Farb and D.~Margalit, \emph{A primer on mapping class groups (pms-49)},
  vol.~49, Princeton University Press, 2011.

\bibitem[GJLL98]{gjll}
D.~Gaboriau, A.~Jaeger, G.~Levitt, and M.~Lustig, \emph{An index for counting
  fixed points of automorphisms of free groups}, Duke mathematical journal
  \textbf{93} (1998), no.~3, 425--452.

\bibitem[GL95]{gl95}
Damien Gaboriau and Gilbert Levitt, \emph{The rank of actions on r-trees},
  Annales scientifiques de l'Ecole normale sup{\'e}rieure, vol.~28,
  Soci{\'e}t{\'e} math{\'e}matique de France, 1995, pp.~549--570.

\bibitem[HM11]{hm11}
M.~Handel and L.~Mosher, \emph{Axes in {O}uter {S}pace}, no. 1004, Memoirs of
  the American Mathematical Society, 2011.

\bibitem[JL09]{jl09}
A.~J{\"a}ger and M.~Lustig, \emph{Free group automorphisms with many fixed
  points at infinity}, arXiv preprint arXiv:0904.1533 (2009).

\bibitem[KZ03]{kz03}
M.~Kontsevich and A.~Zorich, \emph{Connected components of the moduli spaces of
  {A}belian differentials with prescribed singularities}, Inventiones
  mathematic{\ae} \textbf{153} (2003), no.~3, 631--678.

\bibitem[Lan04]{l04}
E.~Lanneau, \emph{Hyperelliptic components of the moduli spaces of quadratic
  differentials with prescribed singularities}, Commentarii Mathematici
  Helvetici \textbf{79} (2004), no.~3, 471--501.

\bibitem[Lan05]{l05}
\bysame, \emph{Connected components of the strata of the moduli spaces of
  quadratic differentials}, arXiv preprint math/0506136 (2005).

\bibitem[LL03]{ll03}
G.~Levitt and M.~Lustig, \emph{Irreducible automorphisms of $ f\_ $\{$n$\}$ $
  have north--south dynamics on compactified outer space}, Journal of the
  Institute of Mathematics of Jussieu \textbf{2} (2003), no.~01, 59--72.

\bibitem[Mah11]{m11}
J.~Maher, \emph{Random walks on the mapping class group}, Duke Mathematical
  Journal \textbf{156} (2011), no.~3, 429--468.

\bibitem[Mas82]{m82}
Howard Masur, \emph{Interval exchange transformations and measured foliations},
  Ann. of Math \textbf{115} (1982), no.~1, 169--200.

\bibitem[MP13]{mp13}
L.~Mosher and C.~Pfaff, \emph{Lone {A}xes in {O}uter {S}pace},
  \url{http://www.latp.univ-mrs.fr/~catherine.pfaff/LoneAxes.pdf},
  arXiv:1311.4490 (2013).

\bibitem[MS93]{ms93}
H.~Masur and J.~Smillie, \emph{Quadratic differentials with prescribed
  singularities and pseudo-{A}nosov diffeomorphisms}, Commentarii Mathematici
  Helvetici \textbf{68} (1993), no.~1, 289--307.

\bibitem[NH86]{n86}
J.~Nielsen and V.L. Hansen, \emph{Jakob {N}ielsen, {C}ollected {M}athematical
  {P}apers: 1913-1932}, vol.~1, Birkhauser, 1986.

\bibitem[Pfa12a]{pt}
C.~Pfaff, \emph{Constructing and {C}lassifying {F}ully {I}rreducible {O}uter
  {A}utomorphisms of {F}ree {G}roups}, Ph.D. thesis, Rutgers University, 2012.

\bibitem[Pfa12b]{pI}
\bysame, \emph{Ideal {W}hitehead {G}raphs in {${\rm Out}(F_r)$} {I}: {S}ome
  {U}nachieved {G}raphs}, To appear in New York Journal of Mathematics,
  arXiv:1210.5762 (2012).

\bibitem[Pfa13a]{pII}
\bysame, \emph{Ideal {W}hitehead {G}raphs in {${\rm Out}(F_r)$} {II}: {T}he
  {C}omplete {G}raph in {E}ach {R}ank}, Journal of Homotopy and Related
  Structures \textbf{10.1007/s40062-013-0060-5} (2013).

\bibitem[Pfa13b]{p13}
\bysame, \emph{{${\rm Out}(F_3)$} index realization},
  \url{http://www.latp.univ-mrs.fr/~catherine.pfaff/IndexListsRank3.pdf},
  arXiv:1311.4490 (2013).

\bibitem[Vee82]{v82}
William~A Veech, \emph{Gauss measures for transformations on the space of
  interval exchange maps}, Annals of Mathematics (1982), 201--242.

\bibitem[Whi36]{w36}
J.H.C. Whitehead, \emph{On certain sets of elements in a free group},
  Proceedings of the London Mathematical Society \textbf{2} (1936), no.~1,
  48--56.

\bibitem[Zor10]{z10}
A.~Zorich, \emph{Explicit {J}enkins-{S}trebel representatives of all strata of
  {A}belian and quadratic differentials}, arXiv preprint arXiv:1011.0395
  (2010).

\end{thebibliography}

\end{document}